\newtheorem{theorem}{Theorem}[section]
\newtheorem{remark}{Remark}[section]%
\newtheorem{definition}{Definition}[section]%
\newcommand{\T}{\text{T}}
\newcommand{\BB}{Barzilai-Borwein }
\newcommand{\BBB}{Barzilai and Borwein }
\newcommand\blfootnote[1]{%
	\begingroup 
	\renewcommand\thefootnote{}\footnote{#1} 
	\addtocounter{footnote}{-1}%
	\endgroup 
}
\begin{document}
	
	\title[Article Title]{Regularized Barzilai-Borwein method}
	
	
	\author{Congpei An\textsuperscript{\rm 1,2}, 
		Xin Xu\textsuperscript{\rm 1}
	}
	
	
	\abstract{In unconstrained optimization problems, gradient descent method is the most basic algorithm, and its performance is directly related to the step size. In this paper, we develop a family of gradient step sizes based on \BB method, named regularized \BB (RBB) step sizes. We indicate that the reciprocal of the RBB step size is the close solution to an $\ell_{2}^{2}$-regularized least squares problem. We propose an adaptive regularization parameter scheme based on the principle of the alternate \BB (ABB) method and the local mean curvature of the objective function. We introduce a new alternate step size criterion into the ABB method, forming a three-term alternate step size, thereby establishing an enhanced RBB method for solving quadratic and general unconstrained optimization problems efficiently. We apply the proposed algorithms to solve typical quadratic and non-quadratic optimization problems, and further employ them to address spherical $t$-design, which is a nonlinear nonconvex optimization problem on an Oblique manifold.}
	
	\keywords{Barzilai-Borwein, regularization, least squares, parameter, step size}
	
	
	\pacs[MSC Classification]{90C20, 90C25, 90C30.}
	
	\maketitle
	
\blfootnote{\Letter Xin Xu \par \; xustonexin@gmail.com} 
\blfootnote{Congpei An \par \;
	andbachcp@gmail.com; ancp@gzu.edu.cn}\par
\footnotetext[1]{School of Mathematics, Southwestern University of Finance and Economics, Chengdu 611130, Sichuan, China}
\footnotetext[2]{Permanent address: School of Mathematics and Statistics, Guizhou University, Guiyang 550025, Guizhou, China}	

\section{Introduction}\label{sec1}

In this paper, we consider solving the large scale unconstrained optimization problem
\begin{equation}\label{generalqua}
\min_{\mathbf{x}\in\mathbb{R}^{n}} f(\mathbf{x}),
\end{equation}
where $f:\mathbb{R}^n\longrightarrow \mathbb{R}$ is a sufficiently smooth function. A minimizer is denoted by $\mathbf{x}_{*}$. Gradient descent methods have been widely used for solving \eqref{generalqua} by the following iterative form 
\begin{equation}\label{gradient method}
\mathbf{x}_{k+1}=\mathbf{x}_{k}+\frac{1}{\alpha_{k}}(-\mathbf{g}_{k}),
\end{equation}
where $\mathbf{x}_{k}$ is the $k$th approximation to $\mathbf{x}_{*}$, $\mathbf{g}_{k}:=\nabla f(\mathbf{x}_{k})$ is the gradient of $f$ at $\mathbf{x}_{k}$, $\frac{1}{\alpha_{k}}$ is the step size. Different gradient descent methods would have different rules for determining the  $\alpha_{k}$. The classical steepest descent (SD) method proposed by Cauchy \cite{Cauchy2009Methodegeneralepour} determines its step size by the so-called exact line search
\begin{equation*}\label{SDG}
\alpha_{k}^{SD}=\mathop{\text{argmin}}_{\alpha>0} f\big(\mathbf{x}_{k}+\frac{1}{\alpha}(-\mathbf{g}_{k})\big).
\end{equation*}
The convergence rate of the SD method depends strongly on the morphology of the objective function $f$. If the ratio of the maximum to minimum eigenvalue of the Hessian matrix of $f$ at any local minimizer $\mathbf{x}_{*}$ is large, the method generates a zigzagging path in a neighborhood of $\mathbf{x}_{*}$, leading to poor performance  \cite{Akaike1959successivetransformationprobability,Sun2006OptimizationTheoryMethods,JorgeNocedal2006NumericalOptimization}.  

In 1988, \BBB(BB)  \cite{Barzilai1988TwoPointStep} proposed two novel $\alpha_{k}$ as follows
\begin{equation}\label{BB steps}
\alpha_{k}^{BB1}:=\frac{\mathbf{s}_{k-1}^{\T}\mathbf{y}_{k-1}}{\mathbf{s}_{k-1}^{\T}\mathbf{s}_{k-1}}\quad\text{and}\quad \alpha_{k}^{BB2}:=\frac{\mathbf{y}_{k-1}^{\T}\mathbf{y}_{k-1}}{\mathbf{s}_{k-1}^{\T}\mathbf{y}_{k-1}},
\end{equation} 
which are the solutions to the following least squares models:
\begin{equation}\label{LS}
\min_{\alpha\in\mathbb{R}} \|\alpha \mathbf{s}_{k-1} - \mathbf{y}_{k-1}\|_{2}^{2}\quad\text{and}\quad \min_{\alpha\in \mathbb{R}} \|\mathbf{s}_{k-1} -  \frac{1}{\alpha}\mathbf{y}_{k-1}\|_{2}^{2},
\end{equation} 
respectively, where $\mathbf{s}_{k-1}=\mathbf{x}_{k}-\mathbf{x}_{k-1}$ and $\mathbf{y}_{k-1}=\mathbf{g}_{k}-\mathbf{g}_{k-1}$. Assuming $\mathbf{s}_{k-1}^{\T}\mathbf{y}_{k-1}>0$, by the Cauchy-Schwarz inequality \cite{Golub2013MatrixComputations}, one has $\alpha_{k}^{BB1}\le\alpha_{k}^{BB2}$. Hence, $\frac{1}{\alpha_{k}^{BB1}}$ is often called the \textit{long} BB step size while $\frac{1}{\alpha_{k}^{BB2}}$ is called the \textit{short} BB step size \cite{Huang2022accelerationBarzilaiBorweinmethod}. In the least squares sense, the BB method approximates the Hessian of $f(\mathbf{x}_{k})$ using $\alpha_{k}^{BB1}\mathbf{I}$ or $\alpha_{k}^{BB2}\mathbf{I}$. Therefore, the BB method incorporates quasi-Newton approach to gradient method by the choice of step size. Moreover, extensive numerical results in  \cite{Friedlander1998GradientMethodRetards,Fletcher2005BarzilaiBorweinMethod,Yuan2018StepSizesGradient} show that the performance of the BB method is significantly superior to that of the SD method. The success of the BB method demonstrates that the poor performance of the SD method cannot be attributed to its search direction; rather, the key factor lies in the step size.  

In the seminal paper \cite{Barzilai1988TwoPointStep}, the authors applied BB method to solve the strictly convex quadratic optimization problem. Especially when $n=2$, the authors proved that the BB method converges \text{R-superlinearly} to the global minimizer. In any dimension, it is still globally convergent \cite{Raydan1993BarzilaiBorweinchoice} but the convergence is R-linear  \cite{Dai2002Rlinearconvergence}. 

In practice, the BB method has garnered significant attention in the optimization community due to its low computational complexity and superior performance. Subsequently, a series of in-depth studies and extensions related to the BB method have emerged. For example, \cite{Molina1996PreconditionedBarzilaiBorwein} proposed the preconditioned \BB method and applied to the numerical solution of partial differential equations.  \cite{Fletcher2005BarzilaiBorweinMethod} provided a detailed analysis of the principle behind the BB method and demonstrated that its performance on certain perturbed problems is comparable to that of the conjugate gradient method. \cite{Dai2005AsymptoticBehaviourSome} investigated the asymptotic behavior of the BB method in high-dimensional optimization problems. Inspired by the BB method, \cite{Friedlander1998GradientMethodRetards} proposed a gradient descent method with delayed steps. \cite{Zhou2006GradientMethodsAdaptive} introduced a gradient descent method with alternating BB (ABB) step sizes, which significantly improves the performance of the original BB method by setting a threshold to dynamically switch between the long and short BB steps. \cite{Dai2006cyclicBarzilaiBorwein} developed a cyclic BB method that reuses a single step size over consecutive iterations, fully leveraging the spectral properties of the BB approach. \cite{Huang2021EquippingBarzilaiBorwein,Huang2022accelerationBarzilaiBorweinmethod} explored accelerating the BB method by enforcing step sizes with finite termination properties. Recently, \cite{Li2024familyBarzilaiBorwein} derived a class of BB step sizes from the perspective of scaled total least squares.

Raydan  \cite{Raydan1997BarzilaiBorweinGradienta} adapted the BB method to non-quadratic unconstrained optimization by incorporating the non-monotone line search of Grippo et al. \cite{Grippo1986NonmonotoneLineSearch}. Since this work, the BB method has been successfully extended to many fields such as convex constrained optimization \cite{Dai2005ProjectedBarzilaiBorweina,Dai2005Newalgorithmssingly,Serafino2018TwoPhaseGradient,Crisci2020SpectralPropertiesBarzilaia}, nonlinear least squares \cite{Mohammad2018StructuredTwoPoint}, image processing \cite{Bonettini2009scaledgradientprojection,Jalilian2023}, spherical $t$-design \cite{An2020Numericalconstructionspherical},  etc.

For strictly convex quadratic optimization problems, the BB method decreases the objective function value in a nonmonotonic manner, but this nonmonotonicity does not undermine its overall convergence. Nevertheless, it is noteworthy that such nonmonotonicity can induce drastic oscillations in the gradient norm, preventing the algorithm from converging rapidly, particularly in the BB1 method with long step sizes. As pointed out in \cite{Fletcher2012limitedmemorysteepest}, maintaining monotonicity is important for gradient descent methods, especially when minimizing general objective functions. 

In this paper, we do not aim to impose strict monotonicity on the BB method but seek to enhance its monotonicity by modifying the BB1 step size. The original BB step size is derived by solving the corresponding least squares model, and regularization \cite{Golub1999TikhonovRegularizationTotal,Lu2013RegularizationTheoryIll} methods are often used to improve the solutions of least squares problems. Building on this insight, we propose incorporating a regularization term into the least squares model associated with the BB1 step size and adaptively adjusting the regularization parameter to improve the performance of the BB1 method.

The main contributions of this paper are as follows: We formally integrate regularization into the BB1 method, resulting in a class of regularized BB (RBB) step sizes, which offers a new perspective for refining BB-like methods. Additionally, we analyze the mathematical principles behind the ABB method \cite{Zhou2006GradientMethodsAdaptive} and propose an effective scheme for selecting the regularization parameter based on this analysis and the local mean curvature of the objective function. Furthermore, building on our analysis of the ABB method, we introduce a new alternate step size criterion and incorporate it into the ABBmin method, thereby developing an enhanced RBB method. Finally, we conduct numerical experiments to apply the proposed RBB methods to solve spherical $t$-design problems.

The remaining part of this paper is organized as follows. In Section \ref{sectionRegBB}, we propose the  regularized BB method. In Section \ref{sec:parameter}, we first analyze the principle of the ABB method and then introduce a family of effective regularization parameters based on this principle and the local mean curvature of objective function. In Section \ref{sec:ERBB}, we propose an enhanced regularized BB method. In Section \ref{sec:numerical}, we conduct numerical experiments to verify the effectiveness of the proposed methods. We conclude in the last section. Unless otherwise stated, throughout this paper $\|\cdot\|_{2}$ refers the Euclidean $2$-norm of vectors, $\mathbf{1}\in\mathbb{R}^{n}$ denotes the all-ones vector, $\mathbf{0}\in\mathbb{R}^{n}$ denotes the all-zeros vector, and $\kappa(\mathbf{A})=\frac{\lambda_{n}}{\lambda_{1}}$ denotes the condition number of a matrix $\boldsymbol{A}$, where $\lambda_{1}$ and $\lambda_{n}$ are the smallest and largest eigenvalues of $\boldsymbol{A}$.

\section{Regularized BB method for convex quadratics}\label{sectionRegBB}

In this section, we consider the following quadratic function
\begin{equation}\label{pro:quaa}
f(\mathbf{x})=\frac{1}{2}(\mathbf{x}-\mathbf{x}_{*})^{\T}\mathbf{A}(\mathbf{x}-\mathbf{x}_{*}),
\end{equation}
where $\mathbf{A}$ is a symmetric positive definite (SPD) matrix. Since gradient descent method is invariant under rotational and translational transformations, without loss of generality, we assume that $\mathbf{A}$ is a diagonal matrix:
\begin{equation}\label{A}
	\mathbf{A} = \text{diag}\{\lambda_{1},\ldots,\lambda_{n}\},
\end{equation}
where $0<\lambda_{1}\le\ldots,\lambda_{n}$. 

\subsection{Motivation}\label{motivation}
We investigate the convergence behavior of the BB1 and BB2 methods. Specifically, we apply these two methods to the non-stochastic problem in \cite{DeAsmundis2014efficientgradientmethod} with a diagonal $\mathbf{A}$ given by
\begin{equation}\label{diagconditionn}
\lambda_{i}=10^{\frac{ncond}{n-1}(n-i)},\quad i=1,\ldots,n,
\end{equation}
where $ncond=\text{log}_{10}(\kappa(\mathbf{A}))$. We set $\mathbf{x}_{*}=\mathbf{1}$ and the initial guess  $\mathbf{x}_{1}=\mathbf{0}$. We set $n=10$, $\kappa(\mathbf{A})=10^{5}$ and use the stopping condition $
\Vert \mathbf{g}_{k}\Vert_{2}\le 10^{-6}\Vert \mathbf{g}_{1}\Vert_{2}$. Initial \text{step size} is $\frac{\mathbf{g}_{1}^{\T}\mathbf{g}_{1}}{\mathbf{g}_{1}^{\T}\mathbf{A}\mathbf{g}_{1}}$.  

From Figure \ref{fig:Motivation}, it can be observed that in this typical problem, the history of $\alpha_{k}^{BB1}$ are concentrated around $\lambda_{10}$, with the corresponding $\|\mathbf{g}_{k}\|_{2}$ exhibiting violent oscillation. Meanwhile, the history of $\alpha_{k}^{BB2}$ present an asymptotically decreasing distribution pattern, and the corresponding values of  $\|\mathbf{g}_{k}\|_{2}$ converge in a relatively stable manner. Since
\begin{equation*}
	\mathbf{g}_{k+1}^{i}=(1-\frac{1}{\alpha_{k}}\lambda_{i})\mathbf{g}_{k}^{i}, \quad i\in\{1,\ldots,10\},
\end{equation*}
where $\lambda_{1}\le\lambda_{i}\le\lambda_{10}$, and $\alpha_{k}^{BB1}\le\alpha_{k}^{BB2}$, the results of Figure \ref{fig:Motivation} demonstrate that $\alpha_{k}^{BB1}$ is difficult to approach $\lambda_{10}$, while $\alpha_{k}^{BB2}$ is relatively easy to approximate $\lambda_{10}$. Therefore, in this problem, the BB1 method requires numerous iterations to eliminate the gradient component corresponding to $\lambda_{10}$, leading to a low convergence rate. In contract, the BB2 method demonstrates higher stability due to its ability to approximate $\lambda_{10}$, making it particularly suitable for some  ill-conditioned problems.

Nevertheless, if we blindly adopt short step sizes similar to those in the BB2 method, the stability of algorithm will be improved, but the convergence rate will also be affected because they cannot rapidly approach $\lambda_{1}$. The following system of inequalities (which derived from Cauchy-Schwarz inequality) explains these phenomena in the BB methods,
\begin{equation}\label{equ:inequ}
\lambda_{1}\le\frac{\mathbf{s}_{k-1}^{\mathrm{T}}\mathbf{y}_{k-1}}{\mathbf{s}_{k-1}^{\mathrm{T}}\mathbf{s}_{k-1}}\le \frac{\mathbf{s}_{k-1}^{\mathrm{T}}\mathbf{A}\mathbf{y}_{k-1}}{\mathbf{s}_{k-1}^{\mathrm{T}}\mathbf{A}\mathbf{s}_{k-1}}\le\ldots\le
\frac{\mathbf{s}_{k-1}^{\mathrm{T}}\mathbf{A}^{m}\mathbf{y}_{k-1}}{\mathbf{s}_{k-1}^{\mathrm{T}}\mathbf{A}^{m}\mathbf{s}_{k-1}}\le\lambda_{n},
\end{equation}
where $m\ge 1$ is an integer. The Rayleigh quotients closer to the left tend to approximate $\lambda_{1}$, while those closer to the right tend to approximate $\lambda_{n}$. The latter contributes to enhancing stability, whereas the former facilitates improving the convergence rate. It is evident that the BB method is essentially an eigenvalue approximation method.
 
\begin{figure}[!h]
	\centering
	\subfigure{
		\includegraphics[width=0.46\textwidth]{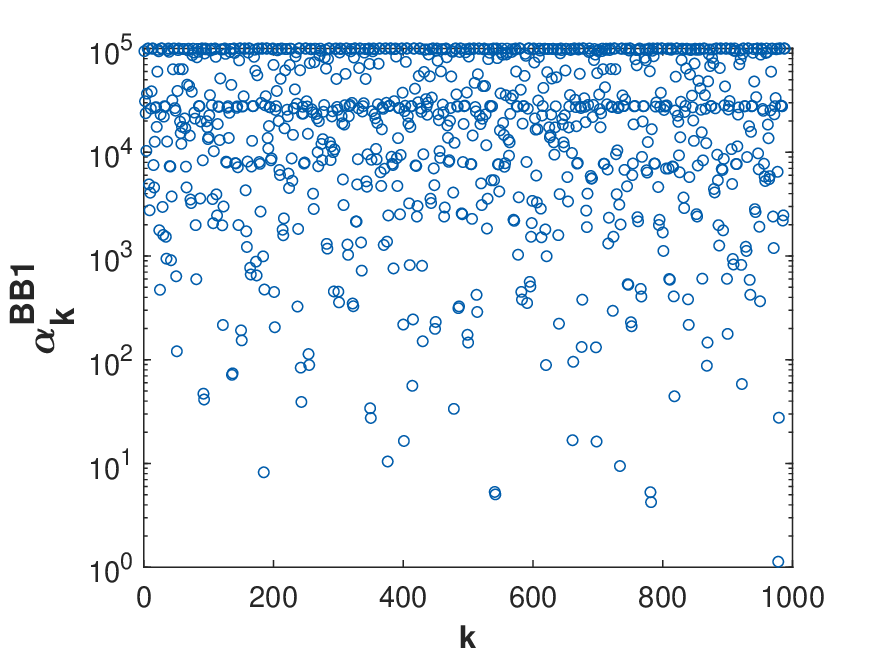}}\hspace{-2pt}
	\subfigure{
		\includegraphics[width=0.46\textwidth]{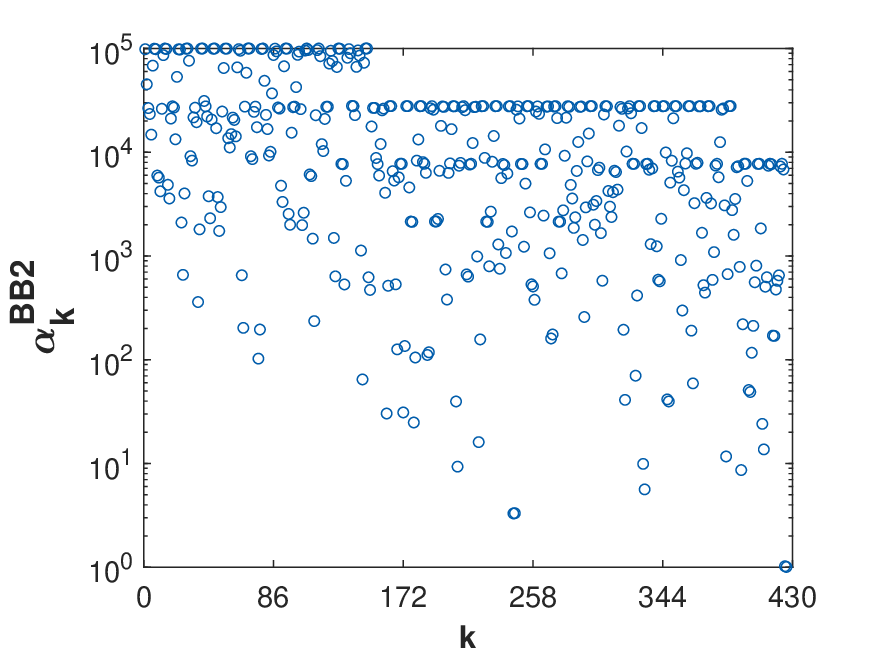}}\\
	\subfigure{
		\includegraphics[width=0.46\textwidth]{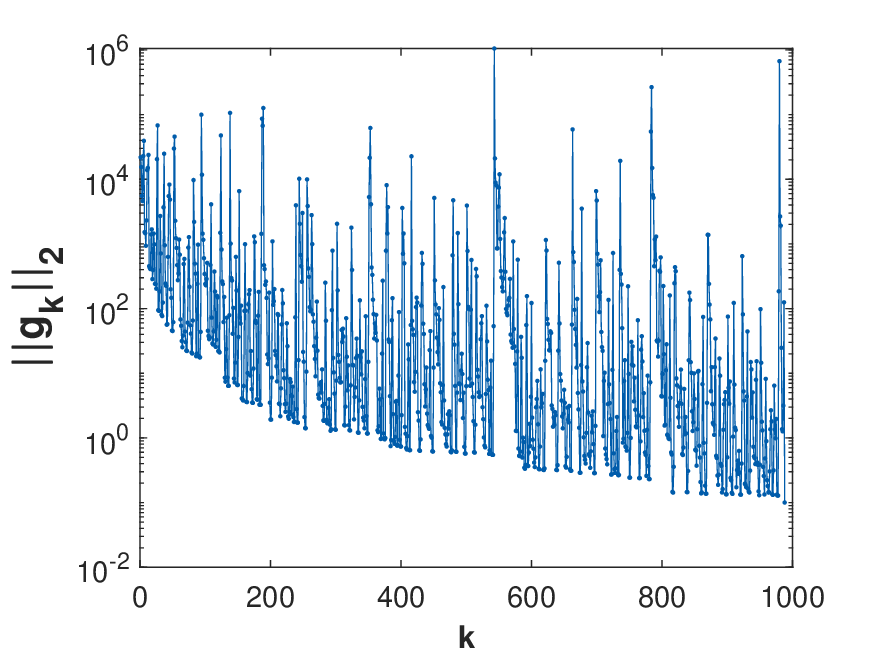}}\hspace{-2pt}
	\subfigure{
		\includegraphics[width=0.46\textwidth]{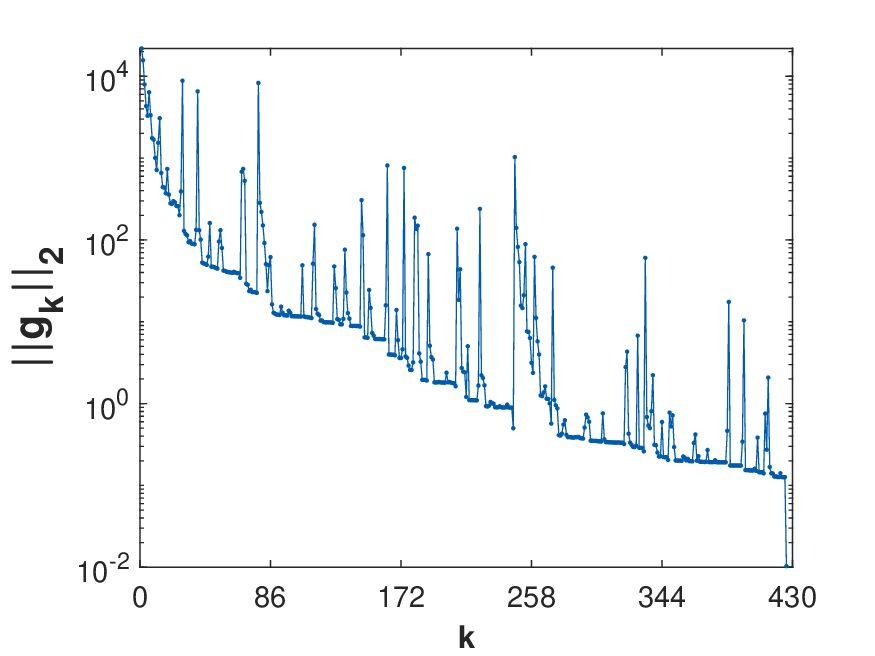}}\\
	\caption{\textit{Problem \eqref{diagconditionn} with $n=10$: historical values of $\alpha_{k}$ (top) and corresponding $\|\mathbf{g}_{k}\|_{2}$ (bottom) generated by the BB1 and BB2 methods}}	
	\label{fig:Motivation}
\end{figure}

This example shows that for strictly convex quadratic optimization problems, in a well-designed BB-like method, the sequence $\alpha_{k}^{BB}$ should exhibit an overall asymptotic convergence trend toward $\lambda_{1}$ with both rapid and stable characteristics. Based on the least squares model corresponding to the BB1 step size, we apply regularization techniques to achieve this goal. Regularization serves to balance long and short step sizes via regularization parameter. 

\subsection{Regularized \BB step sizes}
In this subsection, we derive a new class of step sizes by incorporating regularization term into the least squares model of the BB1 method, and subsequently propose an effective adaptive scheme for regularization parameter selection. 

Assuming $\Phi_{k}(\mathbf{A})$ is a symmetric positive definite matrix, we consider the following regularized least squares problem
\begin{equation}\label{equ: least square with regular}
\min_{\alpha>0} \Big\{\Vert \alpha \mathbf{s}_{k-1} - \mathbf{y}_{k-1}\Vert_{2}^{2} + \tau_{k}\Vert  \alpha \Phi_{k}(\mathbf{A}) \mathbf{s}_{k-1} - \Phi_{k}(\mathbf{A}) \mathbf{y}_{k-1}\Vert_{2}^{2} \Big\},
\end{equation}
where $\tau_{k}\ge0$ is the regularization parameter. We prove in Theorem \ref{theorem: solution} that 
\begin{equation}\label{equ: regBBstepsize}
\alpha_{k}^{RBB}=\frac{\mathbf{s}_{k-1}^{\mathrm{T}}\mathbf{y}_{k-1}+\tau_{k} \mathbf{s}_{k-1}^{\mathrm{T}}\Phi_{k}(\mathbf{A})^{\T}\Phi_{k}(\mathbf{A}) \mathbf{y}_{k-1}}{\mathbf{s}_{k-1}^{\mathrm{T}}\mathbf{s}_{k-1}+\tau_{k} \mathbf{s}_{k-1}^{\mathrm{T}}\Phi_{k}(\mathbf{A})^{\T}\Phi_{k}(\mathbf{A}) \mathbf{s}_{k-1}}
\end{equation}
is the solution to problem \eqref{equ: least square with regular}.

\begin{theorem}\label{theorem: solution}
	Let $\tau_{k}\ge0$. Then the scalar $\alpha_{k}^{RBB}$ defined by \eqref{equ: regBBstepsize} is the unique solution to problem \eqref{equ: least square with regular}.
\end{theorem}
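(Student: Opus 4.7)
The plan is to recognize $\varphi_{\tau_k}(\alpha)$ as a scalar quadratic in the single variable $\alpha$, show that it is strictly convex, and then identify its unique minimizer via the first-order optimality condition. No machinery beyond one-variable calculus is needed.

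First I would expand the two squared norms in \eqref{theo:critical} and collect terms by powers of $\alpha$, writing
\begin{equation*}
\varphi_{\tau_k}(\alpha) = a\alpha^{2} - 2b\alpha + c,
\end{equation*}
where $a = s^{\T}s + \tau_k\, s^{\T}\Lambda_{1}^{2}s$, $b = s^{\T}y + \tau_k\, s^{\T}\Lambda_{1}\Lambda_{2}y$, and $c$ collects the terms independent of $\alpha$. Next I would verify that $a>0$: since $s=s_{k-1}\neq 0$ we have $s^{\T}s>0$, and the additional summand $\tau_k\, s^{\T}\Lambda_{1}^{2}s$ is nonnegative because $\tau_k\geq 0$ and $\Lambda_{1}^{2}$ is positive semidefinite. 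Hence $\varphi_{\tau_k}$ is a strictly convex parabola in $\alpha$ and therefore admits a unique global minimizer.

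The first-order condition $\varphi'_{\tau_k}(\alpha)=0$ reads $2a\alpha - 2b = 0$, so $\alpha = b/a$, which is exactly the expression in \eqref{equ: regBBstepsize}. Uniqueness is immediate from strict convexity, equivalently from $\varphi''_{\tau_k}(\alpha) = 2a > 0$ everywhere.

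The only delicate point I anticipate is feasibility with respect to the constraint $\alpha>0$ in \eqref{equ: least square with regular}: the unconstrained minimizer $b/a$ agrees with the constrained one only when $b>0$. For the quadratic problem \eqref{quadratic problem} with SPD Hessian $A$ we already have $s^{\T}y = s^{\T}As>0$, and for the choices of $\Lambda_{1},\Lambda_{2}$ used later in the paper the cross term $\tau_k\, s^{\T}\Lambda_{1}\Lambda_{2}y$ is controlled so that $b$ remains positive. Modulo this positivity check, the theorem reduces to a two-line calculus exercise, and I do not foresee any serious obstacle in carrying the argument out in detail.
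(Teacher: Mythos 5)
Your proof follows exactly the same route as the paper's: both treat $\varphi_{\tau_k}(\alpha)$ as a strictly convex quadratic in the single variable $\alpha$ and identify the unique minimizer by setting $\varphi'_{\tau_k}(\alpha)=0$, which yields the formula \eqref{equ: regBBstepsize}. Your two extra observations --- explicitly checking that the leading coefficient $s^{\T}s+\tau_k s^{\T}\Lambda_1^2 s$ is positive, and noting that the constrained problem over $\alpha>0$ coincides with the unconstrained one only when $s^{\T}y+\tau_k s^{\T}\Lambda_1\Lambda_2 y>0$ --- are legitimate points of care that the paper's proof passes over in silence.
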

{\it Proof} Because problem \eqref{equ: least square with regular} is a strictly convex unconstrained optimization problem, the stationary point of the objective is none other than the unique solution to \eqref{equ: least square with regular}. Taking the first derivative of the objective in problem \eqref{equ: least square with regular}  with respect to $\alpha$ leads to the first-order optimality condition
	\begin{equation}\label{firstCondition}
	\alpha(\mathbf{s}_{k-1}^{\mathrm{T}}\mathbf{s}_{k-1}+\tau_{k} \mathbf{s}_{k-1}^{\mathrm{T}}\Phi_{k}(\mathbf{A})^{\T}\Phi_{k}(\mathbf{A}) \mathbf{s}_{k-1})-(\mathbf{s}_{k-1}^{\T}\mathbf{y}_{k-1}+\tau_{k} \mathbf{s}_{k-1}^{\T}\Phi_{k}(\mathbf{A})^{\T}\Phi_{k}(\mathbf{A}) \mathbf{y}_{k-1})=0.
	\end{equation}
	With the first-order optimality condition \eqref{firstCondition}, we obtain the solution \eqref{equ: regBBstepsize}. Since $\Phi_{k}(\mathbf{A})^{\T}\Phi_{k}(\mathbf{A})$ is a symmetric positive definite matrix, the denominator of $\alpha_{k}^{RBB}$ in \eqref{equ: regBBstepsize} is not equal to zero when $\mathbf{s}_{k-1}\neq\mathbf{0}$. This completes the proof.
\qed

If $\tau_{k}=0$, \eqref{equ: regBBstepsize} degenerates to $\alpha_{k}^{BB1}$. Currently, there are two selectable options: $\tau_{k}$ and $\Phi_{k}(\mathbf{A})$, where the selection of $\Phi_{k}(\mathbf{A})$ is determined by the inequalities \eqref{equ:inequ}. We aim for the regularization term to generate a larger scalar than $\alpha_{k}^{BB1}$, then utilizing the regularization parameter to balance between long and short step sizes.  

We now consider $\Phi_{k}(\mathbf{A})=\sqrt{\mathbf{A}}$ and $\Phi_{k}(\mathbf{A})=\mathbf{A}$ in \eqref{equ: regBBstepsize}. For clarity, we designate the scalars associated with them as $\alpha_{k}^{RBB}$ and $\alpha_{k}^{RBBA}$, respectively, which implies 
\begin{equation}\label{equ: quaregBBstepsize}
\begin{split}
\alpha_{k}^{RBB}=&\frac{\mathbf{s}_{k-1}^{\T}\mathbf{y}_{k-1}+\tau_{k}\mathbf{s}_{k-1}^{\T} \mathbf{A}\mathbf{y}_{k-1}}{\mathbf{s}_{k-1}^{\T}\mathbf{s}_{k-1}+\tau_{k} \mathbf{s}_{k-1}^{\T}\mathbf{A}\mathbf{s}_{k-1}}=\frac{\mathbf{s}_{k-1}^{\T}\mathbf{y}_{k-1}+\tau_{k}\mathbf{y}_{k-1}^{\T} \mathbf{y}_{k-1}}{\mathbf{s}_{k-1}^{\T}\mathbf{s}_{k-1}+\tau_{k} \mathbf{s}_{k-1}^{\T}\mathbf{y}_{k-1}},\\ \alpha_{k}^{RBBA}=&\frac{\mathbf{s}_{k-1}^{\T}\mathbf{y}_{k-1}+\tau_{k}\mathbf{s}_{k-1}^{\T} \mathbf{A}^2\mathbf{y}_{k-1}}{\mathbf{s}_{k-1}^{\T}\mathbf{s}_{k-1}+\tau_{k} \mathbf{s}_{k-1}^{\T}\mathbf{A}^2\mathbf{s}_{k-1}}=\frac{\mathbf{s}_{k-1}^{\T}\mathbf{y}_{k-1}+\tau_{k}\mathbf{y}_{k-1}^{\T}\mathbf{A} \mathbf{y}_{k-1}}{\mathbf{s}_{k-1}^{\T}\mathbf{s}_{k-1}+\tau_{k} \mathbf{y}_{k-1}^{\T}\mathbf{y}_{k-1}}.
\end{split}
\end{equation}
For these two selections of $\Phi_{k}(\mathbf{A})$, the solutions of the regularization term $\mathop{\text{argmin}}_{\alpha}\Vert  \alpha \Phi_{k}(\mathbf{A}) \mathbf{s}_{k-1}-\Phi_{k}(\mathbf{A}) \mathbf{y}_{k-1}\Vert_{2}^{2}$ in the problem \eqref{equ: least square with regular} are
\begin{equation*}
\beta_{k}=\frac{\mathbf{s}_{k-1}^{\mathrm{T}}\mathbf{A}\mathbf{y}_{k-1}}{\mathbf{s}_{k-1}^{\mathrm{T}}\mathbf{A}\mathbf{s}_{k-1}}=\alpha_{k}^{BB2}\quad\text{and}\quad\beta_{k}^{A}=\frac{\mathbf{s}_{k-1}^{\mathrm{T}}\mathbf{A}^2\mathbf{y}_{k-1}}{\mathbf{s}_{k-1}^{\mathrm{T}}\mathbf{A}^2\mathbf{s}_{k-1}},
\end{equation*}
respectively.
 
\begin{theorem}\label{montonic}
	Assume that $\mathbf{s}_{k-1}^{\mathrm{T}}\mathbf{y}_{k-1}>0$ and $\tau_{k}\ge 0$. Then the $\alpha_{k}^{RBB}$ in \eqref{equ: quaregBBstepsize} belongs to  $[\alpha_{k}^{BB1},\, \beta_{k}]$ and is monotonically increasing with respect to parameter $\tau_{k}$.
\end{theorem}
{\it Proof}	Let $h_{\tau_{k}}(\alpha^{RBB})$ be the derivative of the $\alpha_{k}^{RBB}$  with respect to $\tau_{k}$. We have  
	\begin{equation*}
	h_{\tau_{k}}(\alpha^{RBB})=\frac{\mathbf{s}_{k-1}^{\mathrm{T}}\mathbf{A}\mathbf{y}_{k-1}\mathbf{s}_{k-1}^{\mathrm{T}}\mathbf{s}_{k-1}-\mathbf{s}_{k-1}^{\mathrm{T}}\mathbf{y}_{k-1}\mathbf{s}_{k-1}^{\mathrm{T}}\mathbf{A}\mathbf{s}_{k-1}}{\big(\mathbf{s}_{k-1}^{\mathrm{T}}(\mathbf{I}+\tau_{k} \mathbf{A})\mathbf{s}_{k-1}\big)^{2}}.
	\end{equation*} 
	According to the inequalities \eqref{equ:inequ}, we have $$\mathbf{s}_{k-1}^{\mathrm{T}}\mathbf{A}\mathbf{y}_{k-1}\mathbf{s}_{k-1}^{\mathrm{T}}\mathbf{s}_{k-1}-\mathbf{s}_{k-1}^{\mathrm{T}}\mathbf{y}_{k-1}\mathbf{s}_{k-1}^{\mathrm{T}}\mathbf{A}\mathbf{s}_{k-1}\ge0.$$ And since $\mathbf{s}_{k-1}^{\T}\mathbf{y}_{k-1}>0$, we have  $\mathbf{s}_{k-1}^{\T}(\mathbf{I}+\tau_{k}\mathbf{A})\mathbf{s}_{k-1}\neq0$. Thus $h_{\tau_{k}}(\alpha^{RBB})\ge0$. This shows that $\alpha_{k}^{RBB}$ increases monotonically  with respect to $\tau_{k}$. The proof is completed. 
\qed

\begin{remark}
For $\Phi_{k}(\mathbf{A})=\mathbf{A}$ (i.e., $\alpha_{k}^{RBBA}$ in \eqref{equ: quaregBBstepsize}), by following a similar process in Theorem \ref{montonic}, we can obtain an analogous conclusion.   	
\end{remark}

\section{Selecting regularization parameter}\label{sec:parameter}
In this section, we address another critical issue: the selection of regularization parameter. We still consider the strictly convex quadratic problems \eqref{pro:quaa}. From the results in Theorem \ref{montonic}, we know that if the regularization parameter $\tau_{k}\rightarrow 0$, then $\alpha_{k}^{RBB}\rightarrow\alpha_{k}^{BB1}$; conversely, if the regularization parameter $\tau_{k}\rightarrow\infty$, then $\alpha_{k}^{RBB}\rightarrow\alpha_{k}^{BB2}$. For brevity, we will focus on RBB hereafter, as RBBA yields analogous conclusions. 

\subsection{Motivation}
As analyzed in subsection \ref{motivation}, the regularization parameter trades off between long and short step size. How can we reasonably choose this trade-off amount? To address this issue, at the current iterate, we consider two factors: the local morphology of the objective function's level set and search direction. 

\subsubsection{Local morphology of the objective function}\label{subsec:localmean}
\begin{definition}\cite{Armijo1966Minimizationfunctionshaving}
	Assume that for a given scalar $\mathcal{M}$ the function $f$ is continuously differentiable on the bounded level set $\mathcal{L}=\{\mathbf{x}:f(\mathbf{x})\le \mathcal{M}\}$. Gradient $\nabla f$ is called Lipschitz continuous on $\mathcal{L}$, if there exists a Lipschitz constant $L>0$, such that
	\begin{equation}\label{LP}
		\|\nabla f(\mathbf{x})-\nabla f(\mathbf{y})\|_{2}\le L\|\mathbf{x}-\mathbf{y}\|_{2},
	\end{equation}
	for every pair $\mathbf{x},\mathbf{y}\in\mathcal{L}$.
\end{definition}

In gradient descent methods, the step size $\frac{1}{\alpha_{k}}$ is related to the Lipschitz constant. \cite{Cauchy2009Methodegeneralepour} stated that the sequence $\{\mathbf{x}_{k}\}_{k=1}^{\infty}$ generated by the iterative scheme
\begin{equation}\label{equ:LP}
	\mathbf{x}_{k+1}=\mathbf{x}_{k}-\frac{1}{2L}\mathbf{g}_{k},\quad k=1,2,\ldots,
\end{equation}
is convergent. Notably, when the objective function is ``steep" (i.e., the Lipschitz constant $L$ is large), the iteration \eqref{equ:LP} requires a small step size to guarantee convergence. Conversely, when the function is ``flat" (i.e., $L$ is small), a large step size can be taken to accelerate convergence \cite{Vrahatis2000classgradientunconstrained}. However, for general functions, we neither know the function's morphology nor have prior knowledge of $L$. Fortunately, a local estimate $\Lambda_{k}$ for $L$ can be easily derived from \eqref{LP}, defined by 
\begin{equation}\label{equ:LLPP}
	\Lambda_{k}=\frac{\|\mathbf{g}_{k}-\mathbf{g}_{k-1}\|_{2}}{\|\mathbf{x}_{k}-\mathbf{x}_{k-1}\|_{2}}.
\end{equation}
This approach ensures that $\Lambda_{k}$ adapts to the local morphology of the objective function. From a computational perspective, $\Lambda_{k}$ in \eqref{equ:LLPP} corresponds precisely to the geometric mean of $\alpha_{k}^{BB1}$ and $\alpha_{k}^{BB2}$ as follows
\begin{equation}\label{equ:meanBB}
	\Lambda_{k}=\sqrt{\alpha_{k}^{BB1}\alpha_{k}^{BB2}}.
\end{equation}
Overall, \eqref{equ:meanBB} captures the local average curvature information of the objective function and characterizes its local shape features. 

\subsubsection{Search direction}\label{subsec:searchdirection}
We now investigate the effect of search direction $-\mathbf{g}_{k-1}$ on the step size. Let 
\begin{equation}\label{costheta}
 \theta_{k}=\angle(\mathbf{g}_{k-1},\mathbf{A}\mathbf{g}_{k-1}). 
\end{equation}
One has
\begin{equation}
 \frac{\alpha_{k}^{BB1}}{\alpha_{k}^{BB2}}=\cos^{2}\theta_{k}. 
\end{equation} 
If $\mathbf{g}_{k-1}$ approaches an eigenvector of $\mathbf{A}$, then $\cos^{2}\theta_{k}$ approximates $1$, and using a long step size becomes more effective than a short one. Otherwise, selecting a short step size is reasonable. This is the philosophy behind the ABB \cite{Zhou2006GradientMethodsAdaptive} method as follows 
\begin{equation}\label{ABB}
\alpha_{k}^{ABB}=\begin{cases}
\alpha_{k}^{BB2},\quad \text{if}\quad  \frac{\alpha_{k}^{BB1}}{\alpha_{k}^{BB2}}<\eta,\\
\alpha_{k}^{BB1},\quad \text{otherwise},
\end{cases}
\end{equation}
where $\eta\in(0,1)$ provided by user. To understand how the search direction influences the choice of step size, we start by analyzing the principle of the ABB method.

Based on the results in \cite{Forsythe1968asymptoticdirectionsthes,Yuan2006newstepsizesteepest}, the behavior of gradient descent method for higher dimensional problems is essential the same as it for two dimensional problems. Therefore, we consider minimizing the quadratic convex function \eqref{pro:quaa} with  
\begin{equation*}
\mathbf{A}=\begin{bmatrix}
\lambda & 0\\
0 & 1
\end{bmatrix},
\end{equation*}
where $\lambda>1$. We denote $\mathbf{g}_{k-1}=(\mathbf{g}_{k-1}^{(1)}, \ \mathbf{g}_{k-1}^{(2)})^{\T}$ with $\mathbf{g}_{k-1}^{(i)}\neq 0$ for $i=1,2$, and let
\begin{equation*}
\epsilon=\frac{(\mathbf{g}_{k-1}^{(1)})^{2}}{(\mathbf{g}_{k-1}^{(2)})^{2}}.
\end{equation*}
Then we have 
\begin{equation}\label{ratio}
\frac{\alpha_{k}^{BB1}}{\alpha_{k}^{BB2}}=\frac{(\lambda\epsilon+1)^2}{(\epsilon+1)(\lambda^{2}\epsilon+1)}.
\end{equation} 
From \eqref{ratio}, given an $\eta\in(0,1)$, if
\begin{equation}\label{equ:leta}
\frac{\alpha_{k}^{BB1}}{\alpha_{k}^{BB2}}<\eta,
\end{equation}
after rearrangement, then we have 
\begin{equation}\label{epsilon}
\phi(\epsilon):=\lambda^{2}(1-\eta)\epsilon^{2}+\big[2\lambda-\eta(1+\lambda^{2})\big]\epsilon+1-\eta<0.
\end{equation}
Note that $\phi(\epsilon)$ is a quadratic function with respect to $\epsilon$. We now analyze the roots of the function $\phi(\epsilon)$. From the discriminant of the quadratic function, we can see that if 
\begin{equation}\label{condition}
\eta>\frac{4\lambda}{(1+\lambda)^{2}},
\end{equation}
then the equation $\phi(\epsilon)=0$ has two positive real roots $\epsilon_{1}$ and $\epsilon_{2}$, and let $\epsilon_{1}<\epsilon_{2}$, which implies 
\begin{equation}\label{solutions}
\epsilon_{1}=\frac{\eta(1+\lambda^{2})-2\lambda-(\lambda^2-1)\sqrt{\eta\big(\eta-\frac{4\lambda}{(1+\lambda)^2}\big)}}{2\lambda^{2}(1-\eta)},\quad \epsilon_{2}=\frac{\eta(1+\lambda^{2})-2\lambda+(\lambda^2-1)\sqrt{\eta\big(\eta-\frac{4\lambda}{(1+\lambda)^2}\big)}}{2\lambda^{2}(1-\eta)}.
\end{equation}
From \eqref{solutions}, we obtain the  following two properties 
\begin{equation}\label{limita1a2}
\epsilon_{1}\to\frac{1-\eta}{\eta\lambda^{2}}=0,\quad \epsilon_{2}\to\frac{\eta}{1-\eta},\quad \text{as}\quad\lambda\to\infty.
\end{equation}
Note that if we further require $\eta<0.5$, which implies $\frac{\eta}{1-\eta}<1$. It follows from the results in \eqref{limita1a2} that 
\begin{equation}\label{results}
\begin{split} &{\lim_{\epsilon\to\epsilon_{2}}\alpha_{k}^{BB1}=\frac{\lambda\frac{\eta}{1-\eta}+1}{\frac{\eta}{1-\eta}+1}=\eta\lambda},\quad{\lim_{\epsilon\to\epsilon_{2}}\alpha_{k}^{BB2}=\frac{\lambda^2\frac{\eta}{1-\eta}+1}{\lambda\frac{\eta}{1-\eta}+1}=\lambda},\\ &{\lim_{\epsilon\to\epsilon_{1}}\alpha_{k}^{BB1}=\frac{\lambda\frac{1-\eta}{\eta\lambda^{2}}+1}{\frac{1-\eta}{\eta\lambda^{2}}+1}=1},\quad{\lim_{\epsilon\to\epsilon_{1}}\alpha_{k}^{BB2}=\frac{\lambda^2\frac{1-\eta}{\eta\lambda^{2}}+1}{\lambda\frac{1-\eta}{\eta\lambda^{2}}+1}=\frac{1}{\eta}}<\lambda (\text{due to \eqref{condition}}).
\end{split}
\end{equation}
If the inequality \eqref{equ:leta} holds, it implies that $\epsilon\in(\epsilon_{1}, \ \epsilon_{2})$, indicating that $\mathbf{g}_{k-1}^{(1)}$ has not been completely eliminated. In this case, $\alpha_{k}^{BB2}$ can better approximate the eigenvalue $\lambda$ corresponding to $\mathbf{g}_{k-1}^{(1)}$, while $\alpha_{k}^{BB1}$ can only approximate up to the $\eta\lambda$ level. If the inequality \eqref{equ:leta} does not hold, we know that either $\epsilon\rightarrow 0$ or $\epsilon\ge\frac{\eta}{1-\eta}$. In the former case, according to the results in \eqref{results}, $\alpha_{k}^{BB1}$ can better approximate the eigenvalue $1$ corresponding to $\mathbf{g}_{k-1}^{(2)}$. In the latter scenario, where the proportion of $|\mathbf{g}_{k-1}^{(1)}|$ is comparable to or exceeds that of $|\mathbf{g}_{k-1}^{(2)}|$, $\alpha_{k}^{BB1}$ can asymptotically reduce the proportion of $|\mathbf{g}_{k}^{(1)}|$ until condition \eqref{epsilon} is satisfied. These results thoroughly explain the principle of the ABB method. Additionally, by assigning $\lambda$ as infinity value here, we obtain favorable results. Nevertheless, in practice, $\lambda$ is a finite value. In this scenario, when $\epsilon\in(\epsilon_{1}, \ \epsilon_{2})$, $\alpha_{k}^{BB2}$ might no longer effectively approximate $\lambda$. A feasible strategy is to record the maximum value of $\alpha_{k}^{BB2}$ over successive iterations and use this maximum value to approximate $\lambda$. This also explains the mathematical principle behind ABBmin \cite{Frassoldati2008Newadaptivestepsize}.   

\subsection{A three-step regularization parameter scheme} 
Subsection \ref{subsec:localmean} employs the geometric mean of $\alpha_{k}^{BB1}$ and $\alpha_{k}^{BB2}$ to characterize the local mean curvature of the objective function, while subsection \ref{subsec:searchdirection} explains the influence of search direction on the step size. In this section, we integrate these two components to design the regularization parameter. 

We now  consider three scalars $\alpha_{k-1}^{BB2}$,  $\alpha_{k}^{BB2}$, and $\alpha_{k}^{BB1}$ and take  
\begin{equation}\label{REtau}
\tau_{k}=\frac{\alpha_{k}^{BB2}}{\alpha_{k}^{BB1}}\Big(\frac{\alpha_{k}^{BB2}}{\alpha_{k-1}^{BB2}}\Big)^2.
\end{equation}  
We call \eqref{REtau} a three-step regularization parameter. Then, we have
\begin{equation}\label{extau}
\tau_{k}=\frac{\alpha_{k}^{BB2}}{\alpha_{k}^{BB1}}\Big(\frac{\alpha_{k}^{BB2}}{\alpha_{k-1}^{BB2}}\Big)^2=\frac{1}{\cos^2\theta_{k}}\frac{\|\mathbf{y}_{k-1}\|_{2}^{2}/\|\mathbf{s}_{k-1}\|_{2}^{2}}{\|\mathbf{y}_{k-2}\|_{2}^{2}/\|\mathbf{s}_{k-2}\|_{2}^{2}}\frac{\cos^2(\theta_{k-1})}{\cos^2(\theta_{k})}=\frac{\Lambda_{k}^2}{\Lambda_{k-1}^2}\zeta_{k},
\end{equation}
where
\begin{equation}\label{meanValue}
\zeta_{k}=\frac{1}{\cos^2\theta_{k}}\frac{\cos^2(\theta_{k-1})}{\cos^2(\theta_{k})}.
\end{equation}
The ratio $\frac{\Lambda_{k}^2}{\Lambda_{k-1}^2}$ in \eqref{extau} describes the relative change in the local mean curvature between two consecutive iterations. If this ratio exceeds $1$, it indicates that the function's morphology at the current iterate is ``steep", choosing a short step size is  appropriate; conversely, selecting a long step size is reasonable.

We now focus on the analysis of $\zeta_{k}$ in \eqref{extau}. If $\cos^{2}\theta_{k}\rightarrow1$ and $\cos^{2}\theta_{k-1}\rightarrow0$, then $\frac{\cos^{2}\theta_{k-1}}{\cos^{2}\theta_{k}}\rightarrow0$ (i.e., selecting $\alpha_{k}^{BB1}$). If $\cos^{2}\theta_{k}\rightarrow0$ and $\cos^{2}\theta_{k-1}\rightarrow1$, then $\frac{\cos^{2}\theta_{k-1}}{\cos^{2}\theta_{k}}\rightarrow\infty$ (i.e., selecting $\alpha_{k}^{BB2}$). Specifically, we analyze the principle of $\zeta_{k}$ from the following three cases:
\begin{enumerate}
	\item[(1)] $\frac{\cos^2\theta_{k-1}}{\cos^2\theta_{k}}<\cos^2\theta_{k}<1\iff\cos^2\theta_{k-1}<\cos^4\theta_{k}\Rightarrow 0<\zeta_{k}<1$, 
	\item[(2)] $\cos^2\theta_{k}<\frac{\cos^2\theta_{k-1}}{\cos^2\theta_{k}}<1\iff\cos^2\theta_{k-1}>\cos^4\theta_{k}\Rightarrow \zeta_{k}>1$,
	\item[(3)] $\cos^2\theta_{k}<1<\frac{\cos^2\theta_{k-1}}{\cos^2\theta_{k}}\iff\cos^2\theta_{k-1}>\cos^4\theta_{k}\Rightarrow \zeta_{k}>1$.
\end{enumerate}
The first case implies that the $\cos^{2}\theta_{k}$ is large, making it advantageous to choose a long step size, while the latter two scenarios correspond to small $\cos^{2}\theta_{k}$ where employing a short step size is appropriate.

In summary, at the current iterate, a large regularization parameter $\tau_{k}$ corresponds to a ``steep" function (i.e., $\frac{\Lambda_{k}^2}{\Lambda_{k-1}^2}>1$) and a search direction $-\mathbf{g}_{k-1}$ that deviates  from an eigenvector of $\mathbf{A}$ (i.e.,  $\zeta_{k}>1$), and leads to a short step size. A small regularization parameter $\tau_{k}$ corresponds to a ``flat" function (i.e., $\frac{\Lambda_{k}^2}{\Lambda_{k-1}^2}<1$) and a search direction $-\mathbf{g}_{k-1}$ aligned with an eigenvector of $\mathbf{A}$ (i.e., $\zeta_{k}<1$), and introduces a long step size. 

However, the $\tau_{k}$ in \eqref{REtau} only provides fundamental guidance for selecting long or short step size, as it cannot rapidly approach $0$ or $\infty$. In practice, we need to adjust $\tau_{k}$: when it is greater than $1$, the adjusted $\tau_{k}$ must swiftly approach $\infty$; when it is less than $1$, the adjusted $\tau_{k}$ must swiftly approach $0$. A feasible strategy is to employ a scaling operation of the following form:
\begin{equation}\label{Retauk}
\tau_{k}(q)=(\tau_{k})^{q},
\end{equation}
for $q\ge1$ is a scaling factor. We will test various choice for $q$ in Section \ref{sec:numerical}.


\section{An enhanced RBB (ERBB) method}\label{sec:ERBB}
Reviewing the analysis of the ABB method in subsection \ref{subsec:searchdirection}, we pointed out that if the inequality \eqref{equ:leta} does not hold, i.e., $\frac{\alpha_{k}^{BB1}}{\alpha_{k}^{BB2}}\ge\eta$, two cases arise: (1) $\epsilon\rightarrow 0$, and (2) $\epsilon\ge\frac{\eta}{1-\eta}$. For the first case, selecting $\alpha_{k}^{BB1}$ is effective. The second case implies that the proportion of $|\mathbf{g}_{k-1}^{(1)}|$ is comparable to or exceeds that of $|\mathbf{g}_{k-1}^{(2)}|$, in which case selecting a short step size like BB2 is more appropriate. Nevertheless, the original ABB method only accounts for the first case.

The key lies in distinguishing between the two cases. We continue to study the two-dimensional problem in subsection \ref{subsec:searchdirection}. Since $\alpha_{k}^{BB1}=\frac{\epsilon\lambda+1}{\epsilon+1}$, when $\epsilon$ approaches $0$, $\alpha_{k}^{BB1}$ approximates $1$, and a corresponding result is that $\alpha_{k}^{BB1}<\alpha_{k-1}^{BB1}$ or $\alpha_{k}^{BB1}<\alpha_{k-1}^{BB2}$.   Based on this observation, in order to identify the differences between the two as accurately as possible, if $\alpha_{k}^{BB1}>\alpha_{k-1}^{BB2}$, we conclude that $\epsilon\ge\frac{\eta}{1-\eta}$. This forms the second criterion for step size selection, refining the choice of alternating step size.  

We incorporate the second criterion into  the ABB method and derive an ERBB method by employing the delay strategy of the ABBmin method as follows 
\begin{equation}\label{alternate step}
\alpha_{k}^{ERBB}=\begin{cases}
\max\big\{\alpha_{j}^{RBB}|j\in\{j_{0},\,\ldots,\,k\}\big\}, & \text{if}\quad \cos^2\theta_{k}<\mu_{k},\\
\max\big\{\alpha_{k}^{BB2},\alpha_{k-1}^{BB2}\big\}, & \text{elseif}\quad \cos^2\theta_{k}\ge\mu_{k} \ \text{and} \ \alpha_{k}^{BB1}>\alpha_{k-1}^{BB2},\\ 
\alpha_{k}^{BB1},& \text{otherwise},
\end{cases}
\end{equation} 
where $j_{0}=\max\{1,\,k-\varrho\}$, $\varrho\ge0$ is an integer,
\begin{equation*}\label{nuk}
\mu_{k}=1-\frac{\alpha_{k}^{BB1}}{\alpha_{k}^{RBB}}.	
\end{equation*}
It follows from the results of Theorem \ref{montonic} that $\alpha_{k}^{BB1}\le\alpha_{k}^{RBB}\le\alpha_{k}^{BB2}$, and thus
\begin{equation}\label{le}
	\frac{\alpha_{k}^{BB1}}{\alpha_{k}^{BB2	}}\le\frac{\alpha_{k}^{BB1}}{\alpha_{k}^{RBB}}.	
\end{equation}
If $\frac{\alpha_{k}^{BB1}}{\alpha_{k}^{BB2}}<\mu_{k}$, then we have $\frac{\alpha_{k}^{BB1}}{\alpha_{k}^{BB2}}<0.5$ due to \eqref{le}. In this case, according to the \text{ABB's} principle, it is appropriate to choose a short step size.

Note that criterion $\frac{\alpha_{k}^{BB1}}{\alpha_{k}^{BB2}}<\mu_{k}$ is not equivalent to $\frac{\alpha_{k}^{BB1}}{\alpha_{k}^{BB2}}<0.5$. For example, if $\frac{\alpha_{k}^{BB1}}{\alpha_{k}^{RBB}}=0.8$, this indicates a weak regularization, which may result from either a small local mean curvature of the objective function or the search direction $-\mathbf{g}_{k-1}$ aligning closely with an eigenvector of the Hessian. In such case, a short step size can only be selected if $\frac{\alpha_{k}^{BB1}}{\alpha_{k}^{BB2}}<0.2$. This implies that the low regularization stems not from $-\mathbf{g}_{k-1}$ but from the local mean curvature of objective function. In other words, $0.5$ is the upper limit for $\mu_{k}$.

For the strictly convex quadratic problems \eqref{pro:quaa}, the $R$-linear  convergence of the RBB, RBBA \eqref{equ: quaregBBstepsize} and ERBB methods \eqref{alternate step} can be easily established using the results in \cite{Dai2003Alternatestepgradient,Li2023NoteRLinear}. 

At first glance, the computational complexity of $\alpha_{k}^{RBB}$ \eqref{equ: quaregBBstepsize} and $\alpha_{k}^{ERBB}$ \eqref{alternate step} may seem high, but in reality, it is nearly identical to that of $\alpha_{k}^{ABB}$, as demonstrated \cite{Zhou2006GradientMethodsAdaptive} below: 
\begin{equation*}{\label{simple}
	\begin{cases}
	&\mathbf{s}_{k-1}^{\T}\mathbf{s}_{k-1}=t_{k-1}^{2}\mathbf{g}_{k-1}^{\T}\mathbf{g}_{k-1}, \\
	&\mathbf{s}_{k-1}^{\T}\mathbf{y}_{k-1}=t_{k-1}\mathbf{g}_{k-1}^{\T}(\mathbf{g}_{k-1}-\mathbf{g}_{k}), \\ 
	&\mathbf{y}_{k-1}^{\T}\mathbf{y}_{k-1}=\mathbf{g}_{k}^{\T}\mathbf{g}_{k}-2\mathbf{g}_{k-1}^{\T}\mathbf{g}_{k}+\mathbf{g}_{k-1}^{\T}\mathbf{g}_{k-1},
	\end{cases}
}\end{equation*} 
where $t_{k-1}=\frac{1}{\alpha_{k-1}^{RBB}}$ or $t_{k-1}=\frac{1}{\alpha_{k-1}^{ERBB}}$. Notice that at every iteration, we only need to compute two inner products, i.e., $\mathbf{g}_{k-1}^{\T}\mathbf{g}_{k}$ and $\mathbf{g}_{k}^{\T}\mathbf{g}_{k}$. This indicates that their computational complexity is comparable to that of  $\alpha_{k}^{BB2}$, and they have an additional inner product operation than $\alpha_{k}^{BB1}$. The regularization parameter \eqref{Retauk} involves only scalar operations. Compared to the RBB method, ERBB stores a few additional scalars, and these computation costs remain negligible. Nevertheless, the RBBA method requires the computation of $n+3$ inner products at each iteration.

\begin{remark}
	The RBBA method exhibits well numerical performance in quadratic problems, as demonstrated in the experimental section. A method to reduce its complexity involves considering the following model 
	\begin{equation*}
	\min_{\alpha>0} \Big\{\Vert \alpha  \mathbf{s}_{k-1} - \mathbf{y}_{k-1}\Vert_{2}^{2} + \tau_{k}\Vert  \alpha \mathbf{y}_{k-1} - \Phi_{k} \mathbf{y}_{k-1}\Vert_{2}^{2} \Big\},
	\end{equation*}
	where $\Phi_{k}\ge\alpha_{k}^{BB2}$. Further improvements in this direction, nevertheless, fall outside the scope of this paper.
\end{remark}

\subsection{\text{Non-quadratic} minimization}\label{sec:nonqua}
In order to extend the \text{RBB} and \text{ERBB} methods for minimizing \text{non-quadratic} continuous differentiable functions \eqref{generalqua}, we usually need to incorporate some line search strategies to ensure global convergence. When $f$ is a generic function, the average Hessian $\mathbf{A}_{k}=\int_{0}^{1}\nabla^{2}f(\mathbf{x}_{k-1}+t\mathbf{s}_{k-1})dt$ satisfies the secant equation $\mathbf{y}_{k-1}=\mathbf{A}_{k}\mathbf{s}_{k-1}$ (cf. e.g., \cite[Eq.(6.11)]{JorgeNocedal2006NumericalOptimization}), and thus we can still view the $\alpha_{k}^{RBB}$ and $\alpha_{k}^{ERBB}$ as Rayleigh quotients, which approximate the eigenvalues of this average $\mathbf{A}_{k}$. We note that under the condition that $\mathbf{A}_{k}$ is \text{SPD}, all results in preceding sections are still valid for generic functions with replacing $\mathbf{A}$ by $\mathbf{A}_{k}$.  

Among BB-like methods, \text{nonmonotonic} line search is an effective strategy \cite{Raydan1997BarzilaiBorweinGradienta}. Here we would like to adopt the \text{Grippo-Lampariello-Lucidi (GLL) nonmonotonic} line search \cite{Grippo1986NonmonotoneLineSearch}, which accepts $\gamma_{k}\in(0,1)$ when it satisfies
\begin{equation}\label{non-monotone}
f(\mathbf{x}_{k}+\gamma_{k}\mathbf{d}_{k})\le\max_{1\le j\le \min\{k,M\}} \big\{f(\mathbf{x}_{k-j+1})\big\} +\sigma\gamma_{k}\mathbf{g}_{k}^{\T}\mathbf{d}_{k},	
\end{equation}  
where $M$ is a nonnegative integer, $\sigma\in(0,\,1)$, and  $\mathbf{d}_{k}=-\frac{1}{\alpha_{k}}\mathbf{g}_{k}$. At the start of each internal line search, we set $\gamma_{k}=1$. To improve the efficiency of the line search, we add a quadratic interpolation strategy \cite[p.58]{JorgeNocedal2006NumericalOptimization} to the GLL line search using the values of $f(\mathbf{x}_{k})$, $f(\mathbf{x}_{k}+\mathbf{d}_{k})$ and $\mathbf{g}_{k}^{\T}\mathbf{d}_{k}$ and obtain the following scalar
\begin{equation}\label{interpolation}
	\bar{\gamma}_{k}=\frac{-\mathbf{g}_{k}^{\T}\mathbf{d}_{k}\gamma_{k}^2}{2\Big(f(\mathbf{x}_{k}+\mathbf{d}_{k})-f(\mathbf{x}_{k})-\gamma_{k}\mathbf{g}_{k}^{\T}\mathbf{d}_{k}\Big)}.
\end{equation}
The detailed procedure is presented in Algorithm \ref{alg:RBB}. Line $2$ describes a condition for the function value to decrease sufficiently, where the common values of line search parameters $\delta=\frac{1}{2}$, $\sigma=10^{-4}$ (cf. \cite[p.33]{JorgeNocedal2006NumericalOptimization}), and $M=10$ in the experiments. The critical assumption to prove the global convergence of Algorithm \ref{alg:RBB} is that step size $\frac{1}{\alpha_{k}}$ is uniformly bounded, i.e., $\alpha_{k}\in[\alpha_{\min}, \alpha_{\max}]$ for all $k$. Since $\frac{1}{\alpha_{k}^{RBB}}$ \eqref{equ: quaregBBstepsize} or $\frac{1}{\alpha_{k}^{ERBB}}$ \eqref{alternate step} with safeguard lies in this interval, the convergence of Algorithm \ref{alg:RBB} is guaranteed by \cite[Thm.2.1]{Raydan1997BarzilaiBorweinGradienta}. The $R$-linear convergence of Algorithm \ref{alg:RBB} can be proved for uniformly convex functions \cite{Dai2002NonmonotoneLineSearch}.

\begin{algorithm}
	\caption{Regularized \BB algorithm for solving general unconstrained optimization problems}\label{alg:RBB}
	\begin{algorithmic}[1]
		\Require Stopping criterion: $\varepsilon>0$, ${\text{MaxIt}>1}$;
		Initialization: $\mathbf{x}_{1}$, $\alpha_{1}\in [\alpha_{\text{min}},\alpha_{\text{max}}]$, $\sigma$, $\delta \in(0,1)$, $M$, $q$, $k=1$, $\mathbf{d}_{k}=-\frac{1}{\alpha_{k}}\mathbf{g}_{k}$, $\gamma_{k}=1$, $\tau_{1}$.
		\While{$\|\mathbf{g}_{k}\|_{2}>\varepsilon$ \rm{or} $k<\rm{MaxIt}$}{ 
		\If{$f(\mathbf{x}_{k}+\gamma_{k}\mathbf{d}_{k})\le\max_{1\le j\le \min\{k,\,M\}} \big\{f(\mathbf{x}_{k-j+1})\big\} +\sigma\gamma_{k}\mathbf{g}_{k}^{\T}\mathbf{d}_{k}$}
		\State 
		 	$\mathbf{x}_{k+1}=\mathbf{x}_{k}+\gamma_{k}\mathbf{d}_{k}$
		 	\If{$\mathbf{s}_{k}^{\T}\mathbf{y}_{k}<0$}
		 	\State
		 	$\bar{\alpha}_{k+1}=\hat{\alpha}_{k+1}$
		 	\Else
		 	\State 
		 	calculate $\bar{\alpha}_{k+1}=\alpha_{k+1}^{RBB}$ \eqref{equ: quaregBBstepsize} \big(\text{or}  $\bar{\alpha}_{k+1}=\alpha_{k+1}^{ERBB}$ \eqref{alternate step}\big)
		 	\EndIf\\
		 		\;\;\qquad\qquad 
		 		set $\alpha_{k+1}=\min\big\{\max\{\bar{\alpha}_{k+1}, \alpha_{\min}\}, \alpha_{\max}\big\}$\\
		 		\;\;\qquad\qquad 
		 		$\mathbf{d}_{k+1}=-\frac{1}{\alpha_{k+1}}\mathbf{g}_{k+1}$, $\gamma_{k+1}=1$\\
		 	\;\;\qquad\qquad set $k=k+1$  
		\Else 
			\If{$\gamma_{k}\le0.1$}
		 	\State $\gamma_{k}=\delta\gamma_{k}$
		 	\Else
		 	\State
		 	calculate $\bar{\gamma}_{k}$ by \eqref{interpolation}
		 		\If{$\bar{\gamma}_{k}<0.1$\quad\rm{or}\quad $\bar{\gamma}_{k}>0.9\gamma_{k}$}
		 		\State
		 		$\gamma_{k}=\delta\bar{\gamma}_{k}$
		 		\Else
		 		\State
		 		$\gamma_{k}=\bar{\gamma}_{k}$
		 		\EndIf
		 	\EndIf
		\EndIf
			}
		\EndWhile
	\end{algorithmic}
\end{algorithm}

The initial step size and the treatment of uphill direction ($\mathbf{s}_{k-1}^{\T}\mathbf{y}_{k-1}<0$, i.e., $\mathbf{A}_{k}$ is not \text{SPD}) are two important factors that affect the performance of Algorithm \ref{alg:RBB}. Popular choice for the initial step size are $\frac{1}{\alpha_{1}}=1$ (cf. eg., \cite{Raydan1997BarzilaiBorweinGradienta,Serafino2018steplengthselectiongradient}) or $\frac{1}{\alpha_{1}}=\frac{1}{\|\mathbf{g}_{1}\|}$ (cf. eg., \cite{Dai2003Alternatestepgradient}), where the norm is the Euclidean $2$-norm or $\infty$-norm. If $\mathbf{s}_{k-1}^{\T}\mathbf{y}_{k-1}<0$, $\alpha_{k}^{RBB}$ and $\alpha_{k}^{ERBB}$ may be negative. In this case, the tentative $\alpha_{k}$ is replaced by a certain $\hat{\alpha}_{k}>0$. A possible choice is $\frac{1}{\hat{\alpha}_{k}}=\frac{\|\mathbf{s}_{k-1}\|_2}{\|\mathbf{y}_{k-1}\|_{2}}$ \cite{Burdakov2019StabilizedBarzilaiBorwein}. \text{Raydan} \cite{Raydan1997BarzilaiBorweinGradienta} suggests using $\frac{1}{\hat{\alpha}_{k}}=\max\big(\min(\|\mathbf{g}_{k}\|_{2}^{-1}, 10^{5}),1\big)$, which makes the sequence $\{\frac{1}{\alpha_{k}}\}$ remains uniformly bounded while keeping $\|\frac{1}{\hat{\alpha}_{k}}\mathbf{g}_{k}\|_{2}$ moderate. Some authors use $\frac{1}{\hat{\alpha}_{k}}=\|\mathbf{g}_{k}\|_{2}^{-1}$, like the initial step size setting, similar to the restart operation.

\section{Numerical experiments}\label{sec:numerical}
In this section, we conduct preliminary numerical experiments \footnote{All experiments were implemented in \text{MATLAB R2024a}. All the runs were carried out on a PC with an 12th Gen Intel(R) Core(TM) i7-12700H 2.30 GHz and 32 GB of RAM} to illustrate the performance of the \text{RBB}, \text{RBBA}, and \text{ERBB} methods. We first report the numerical performance of the RBB step size $\frac{1}{\alpha_{k}^{RBB}}$ \eqref{equ: quaregBBstepsize} with various $q$ in \eqref{Retauk}, then conduct numerical comparison experiments with some outstanding algorithms of the same type. 
\subsection{Choice of q}\label{subsec:choice q}
In this subsection, we evaluate the performance of various $q$ values using the performance profile \cite{Dolan2002Benchmarkingoptimizationsoftware}. The cost of solving each problem is normalized according to the lowest cost of solving that problem to obtain the performance ratio $\tau$. The most efficient method solves a given problem with a performance ratio $1$, while all other methods solve the problem with a performance ratio of at least $1$. In order to grasp the full implications of our test data regarding the solvers' probability of successfully handing a problem, we display a $log$ scale of the performance profiles. Since we are also interested in the behavior for $\tau$ close to $1$, we use use a base of $2$ for the scale \cite{Dolan2002Benchmarkingoptimizationsoftware}. Therefore, the value of $\rho_{s}(\log_2(1))$ is the probability that the solver $s$ will win over the rest of the solvers. Unless otherwise specified, the performance profiles mentioned in subsequent experiments are all $\log_2$ scaled. For convenience, we denote $\omega=\log_2(\tau)$ in this paper.

Consider the following quadratic function from \cite{DeAsmundis2014efficientgradientmethod}:
\begin{equation}\label{pro:quadra}
f(\mathbf{x})=\frac{1}{2}(\mathbf{x}-\mathbf{x}_{*})^{\T}\mathbf{A}(\mathbf{x}-\mathbf{x}_{*}),
\end{equation}
where $\mathbf{x}_{*}$ is uniformly and randomly generated from $[-10, 10]^{n}$, $\mathbf{A}=\mathbf{Q}\cdot\text{diag}(v_{1},\ldots,v_{n})\cdot \mathbf{Q}^{\T}$ with  $\mathbf{Q}=(\mathbf{I}-2\mathbf{\omega}_3\mathbf{\omega}_3^{\T})(\mathbf{I}-2\mathbf{\omega}_2\mathbf{\omega}_2^{\T})(\mathbf{I}-2\mathbf{\omega}_1\mathbf{\omega}_1^{\T})$,  $\mathbf{\omega}_1$, $\mathbf{\omega}_2$ and $\mathbf{\omega}_3$ being unit random vectors, $v_1=1$, $v_n=\kappa(\mathbf{A})$ and $v_j$ is randomly generated between $1$ and $\kappa(\mathbf{A})$ for $j=2,\ldots, n-1$. 

We set $n\in\{100, 1000\}$ and $\kappa(\mathbf{A})=10^3, 10^4, 10^5, 10^6$. The initial guess is a vector randomly generated from $[-5, 5]^{n}$. The stopping criterion is either the gradient at the $k$-th iteration satisfies that $\|\mathbf{g}_{k}\|_{2}\le\varepsilon\|\mathbf{g}_{1}\|_{2}$ with $\varepsilon=10^{-6}, 10^{-8}, 10^{-10}$ or the number of iterations exceeds $20000$. The initial step size is the steepest descent step $\frac{\mathbf{g}_{1}^{\T}\mathbf{g}_{1}}{\mathbf{g}_{1}^{\T}\mathbf{A}\mathbf{g}_{1}}$. The code was independently executed 20 times to investigate the impact of the scaling factor $q$ in the regularization parameters \eqref{Retauk} on the performance of the RBB method.

Figure \ref{fig:parameterRBB} displays the performance profile of the RBB method with different $q$ \eqref{Retauk} on the quadratic problems \eqref{pro:quadra}, based on the number of iterations. It is evident that the best choice of $q$ appear to be $4$, $6$ or $8$ for $n=100$, $2$ or $8$ for $n=1000$. In order to achieve a fast approximation to the BB1 or BB2 step, through this paper, we set $q=8$ in $\alpha_{k}^{RBB}(\tau_{k}(q))$. In this problem, compared to the BB1 and BB2 methods, the RBB method demonstrates a significant advantage even for $q=1$, indicating the effectiveness of the regularization parameter scheme \eqref{Retauk}. 
\begin{figure}[!ht]
	\centering
	\subfigure[n=100]{
		\includegraphics[width=0.48\textwidth]{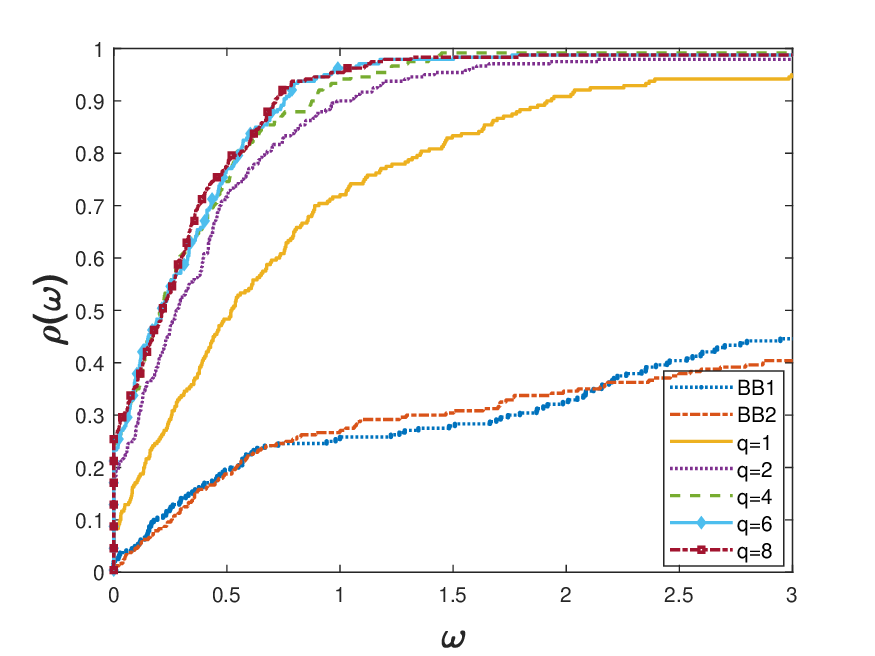}}\hspace{-4pt}
	\subfigure[n=1000]{
		\includegraphics[width=0.48\textwidth]{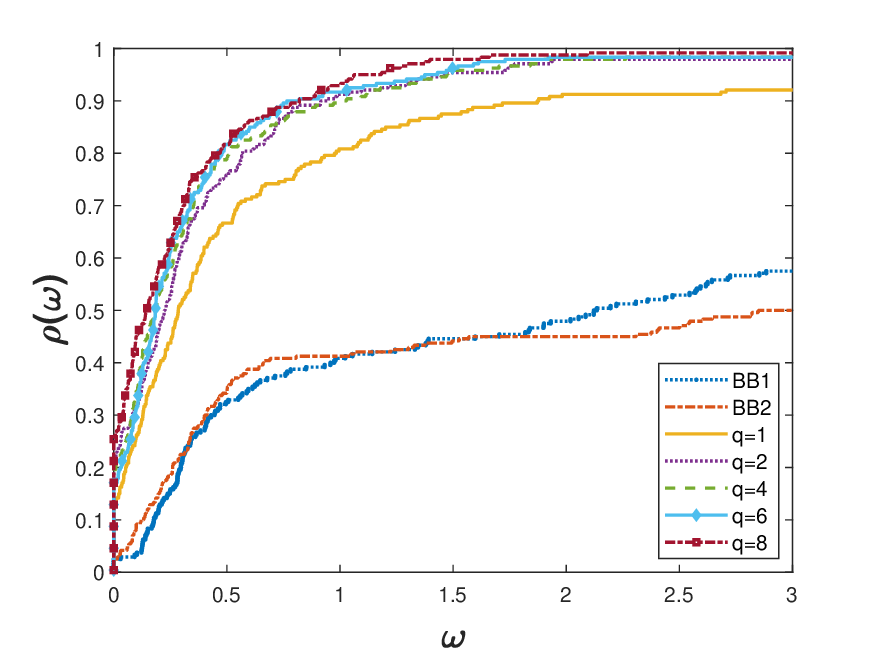}}\\
	\caption{\textit{Performance profiles of the RBB method with different $q$ on the quadratic problems \eqref{pro:quadra}, iteration metric}}	
	\label{fig:parameterRBB}
\end{figure}
\subsection{Test on the non-stochastic problem \eqref{diagconditionn}}
In this subsection, we investigate the behavior of the RBB, RBBA, and ERBB methods on the non-stochastic problem \eqref{diagconditionn}. We also consider the ABBmin method \cite{Frassoldati2008Newadaptivestepsize}
\begin{equation}\label{ABBmin}
\alpha_{k}^{ABBmin}=\begin{cases}
\max\big\{\alpha_{j}^{BB2}|j=\max\{1,k-m\},\ldots,k\big\},\quad&\text{if}\quad\cos^2\theta_{k}<\nu,\\
\alpha_{k}^{BB1},\quad&\text{otherwise},
\end{cases}
\end{equation} 
and the parameters $m$ and $\varrho$ are both set to $9$ in ABBmin and ERBB. We set $\nu=0.8$ in \eqref{ABBmin}. The problem-dependent parameters are the same as in subsection \ref{motivation}. Figure \ref{fig:Motivation2} and \ref{fig:Motivation3} record the history of $\alpha_{k}$ and the $\ell_{2}$-norm of the gradient generated by these methods.
\begin{figure}[!h]
	\centering
	\subfigure{
		\includegraphics[width=0.48\textwidth]{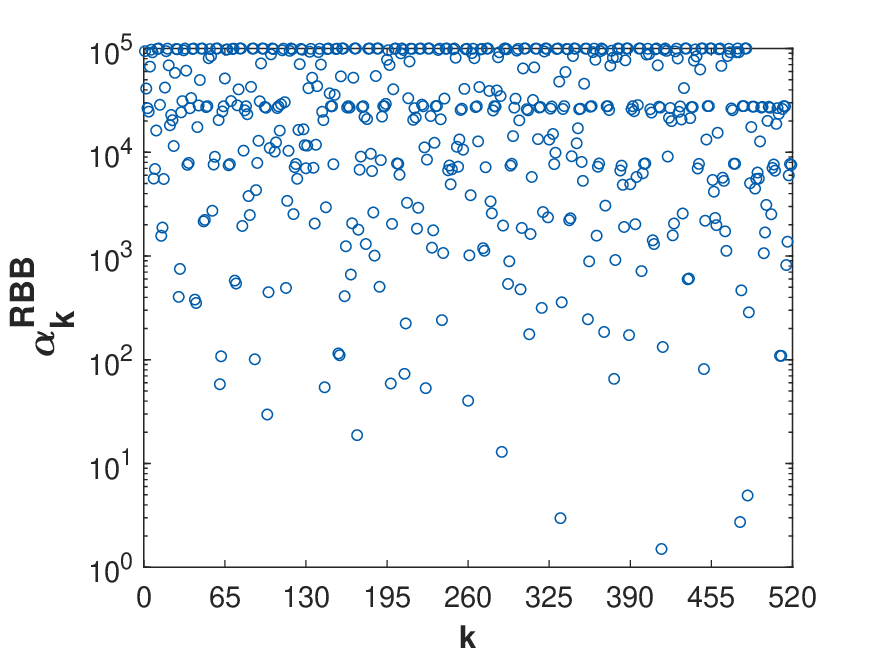}}\hspace{-4pt}
	\subfigure{
		\includegraphics[width=0.48\textwidth]{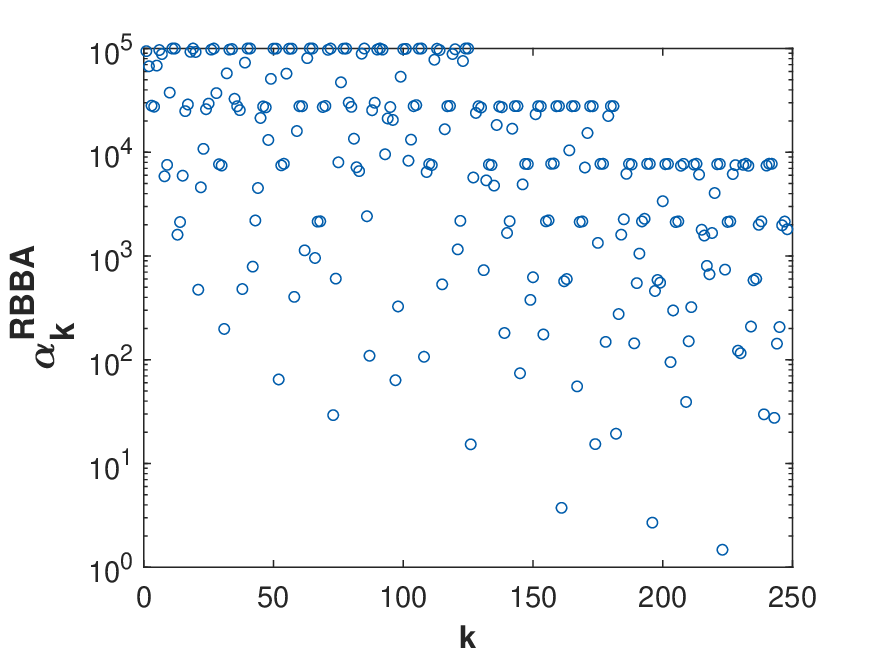}}\\
	\subfigure{
		\includegraphics[width=0.48\textwidth]{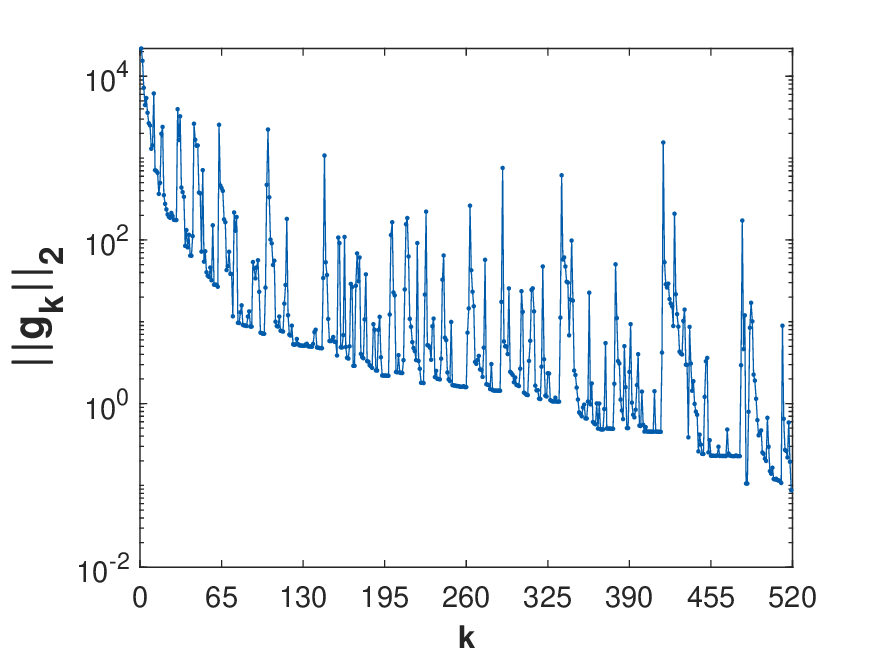}}\hspace{-4pt}
	\subfigure{
		\includegraphics[width=0.48\textwidth]{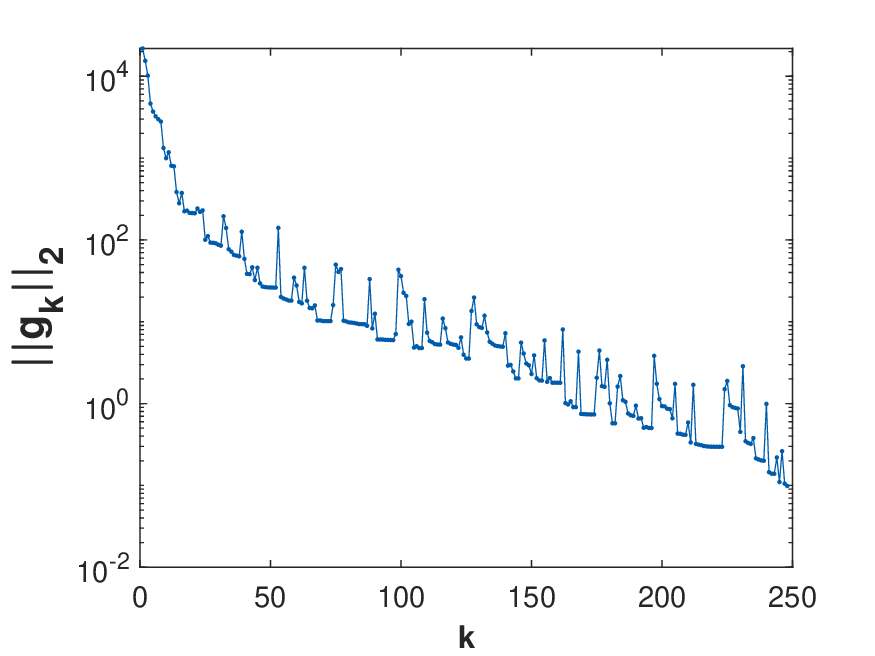}}\\
	\caption{\textit{Problem \eqref{diagconditionn} with $n=10$: historical values of $\alpha_{k}$ (top) and corresponding $\|\mathbf{g}_{k}\|_{2}$ (bottom) generated by the RBB and RBBA methods}}	
	\label{fig:Motivation2}
\end{figure}

\begin{figure}[!h]
	\centering
	\subfigure{
		\includegraphics[width=0.48\textwidth]{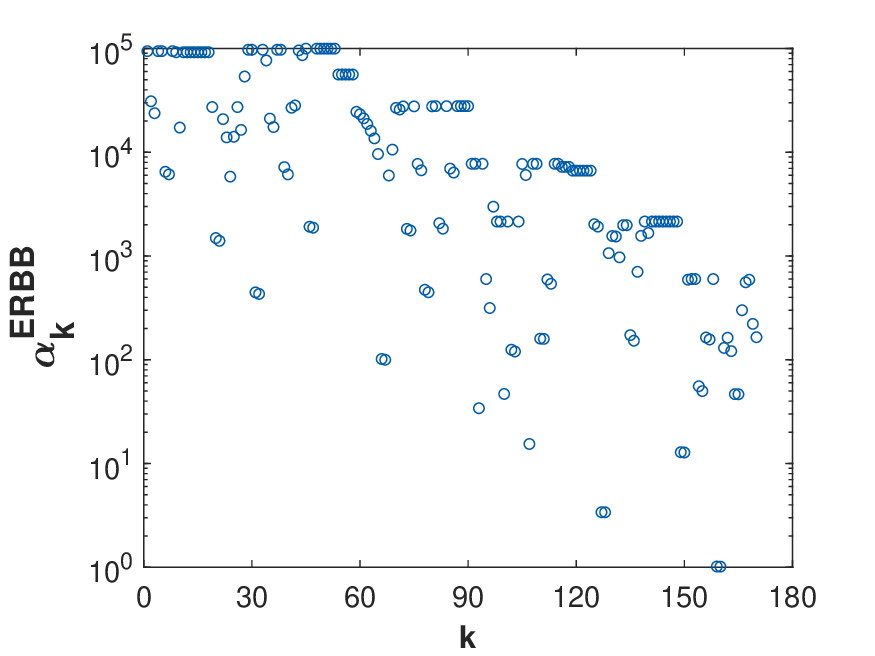}}\hspace{-4pt}
	\subfigure{
		\includegraphics[width=0.48\textwidth]{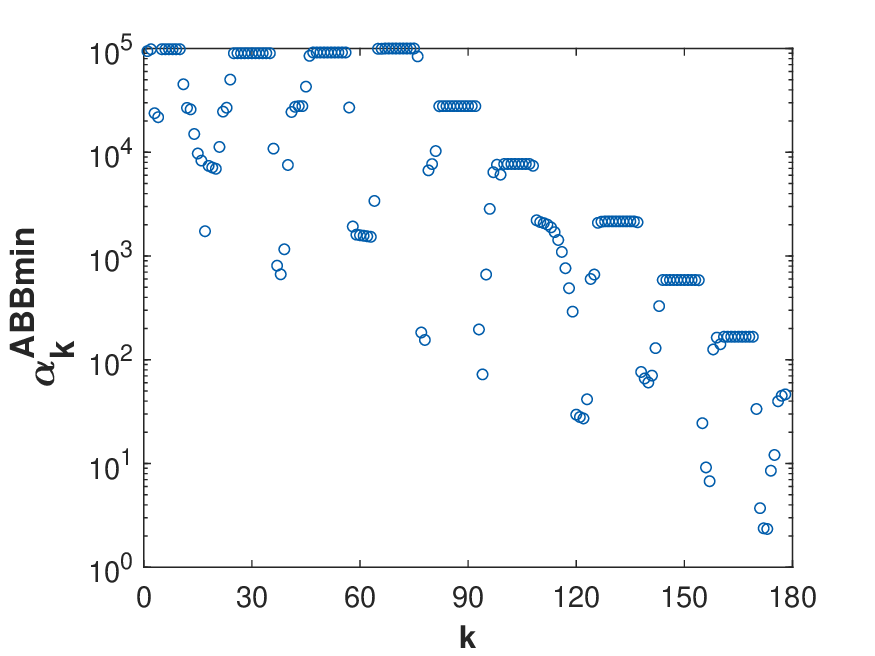}}\\
	\subfigure{
		\includegraphics[width=0.48\textwidth]{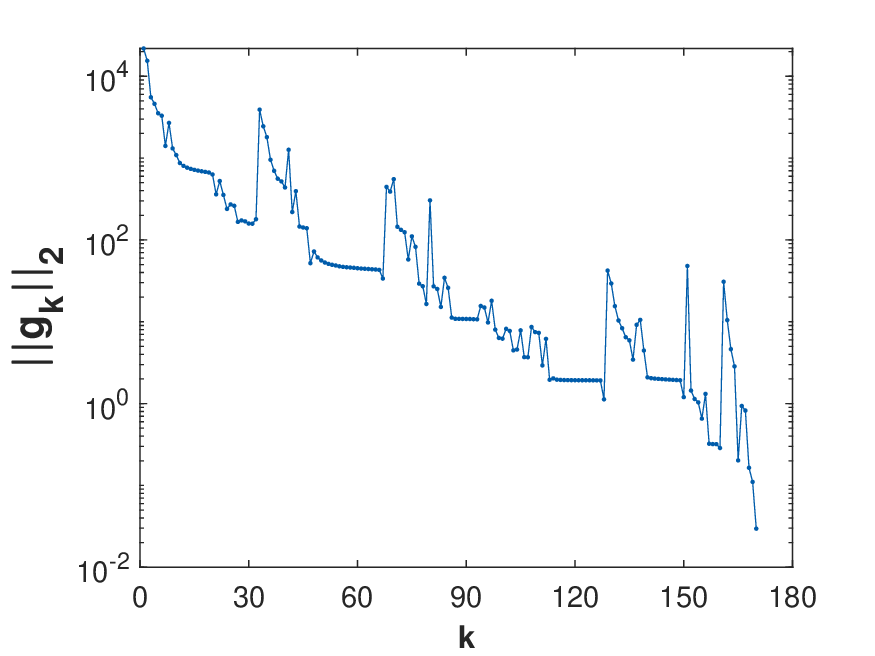}}\hspace{-4pt}
	\subfigure{
		\includegraphics[width=0.48\textwidth]{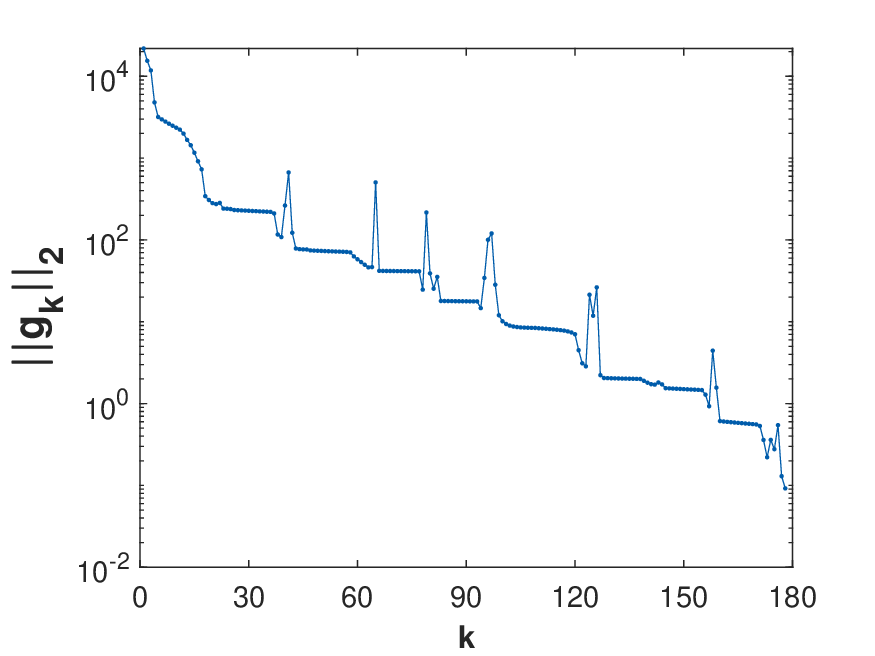}}\\
	\caption{\textit{Problem \eqref{diagconditionn} with $n=10$: historical values of $\alpha_{k}$ (top) and corresponding $\|\mathbf{g}_{k}\|_{2}$ (bottom) generated by the ERBB and ABBmin methods}}	
	\label{fig:Motivation3}
\end{figure}

From the results in Figure \ref{fig:Motivation2}, it can be observed that RBBA performs better than RBB, which is attributed to the fact that $\alpha_{k}^{RBBA}$ can more easily approximate the large eigenvalues of $\mathbf{A}$, making it more asymptotically convergent, that is, $\alpha_{k}^{RBBA}$ has a stronger ability to scan the spectrum of Hessian $\mathbf{A}$. In Figure \ref{fig:Motivation3}, observing the difference in the behavior of $\alpha_{k}^{ERBB}$ and $\alpha_{k}^{ABBmin}$, the former achieves the first global decrease at the $53$rd iteration and approaches the minimum eigenvalue $1$ of $\mathbf{A}$ at the $159$th iteration, while the latter achieves the first global decrease at the $76$th iteration, which indicates that the second alternation criterion in the ERBB method is effective, which improves the convergence of the alternating step size strategy. 
\subsection{Comparing with the outstanding algorithms for quadratics }
We test on the quadratic function \eqref{pro:quadra} with seven kinds of distributions of $v_{j}$ summarized in Table \ref{tab:spectrum} from \cite{Dai2019familyspectralgradient}. We set the dimension $n=1000$, $\varepsilon=10^{-6}, 10^{-10}$ corresponding to the low-precision and high-precision convergence respectively, while other configurations including the initial point and condition number $\kappa(\mathbf{A})$ remain identical to those in subsection \ref{subsec:choice q}. 
\begin{table}[!ht]
	\centering
	\caption{Different distributions of $v_{j}$ for the problem \eqref{pro:quadra}}
	\setlength{\tabcolsep}{10pt}{
		\begin{tabular}{|cccc|cccc|}
			\toprule
			P     & \multicolumn{3}{c|}{$v_{j}$} & P     & \multicolumn{3}{c|}{$v_{j}$} \\
			\hline
			1     & \multicolumn{3}{c|}{$\{v_{2},\ldots,v_{n-1}\}\subset(1,\kappa)$} & \multirow{3}[4]{*}{5} & \multicolumn{3}{c|}{$\{v_{2},\ldots,v_{n/5}\}\subset(1,100)$} \\
			\cmidrule{1-4}    \multirow{2}[2]{*}{2} & \multicolumn{3}{c|}{$\{v_{2},\ldots,v_{n/5}\}\subset(1,100)$} &       & \multicolumn{3}{c|}{$\{v_{n/5+1},\ldots,v_{4n/5}\}\subset(100,\frac{\kappa}{2})$} \\
			& \multicolumn{3}{c|}{$\{v_{n/5+1},\ldots,v_{n-1}\}\subset(\frac{\kappa}{2},\kappa)$} &       & \multicolumn{3}{c|}{$\{v_{4n/5+1},\ldots,v_{n-1}\}\subset(\frac{\kappa}{2}, \kappa)$} \\
			\hline
			\multirow{2}[2]{*}{3} & \multicolumn{3}{c|}{$\{v_{2},\ldots,v_{\frac{n}{2}}\}\subset(1,100)$} & \multirow{2}[2]{*}{6} & \multicolumn{3}{c|}{$\{v_{2},\ldots,v_{10}\}\subset(1,100)$} \\
			& \multicolumn{3}{c|}{$\{v_{n/2+1},\ldots,v_{n-1}\}\subset(\frac{\kappa}{2},\kappa)$} &       & \multicolumn{3}{c|}{$\{v_{11},\ldots,v_{n-1}\}\subset(\frac{\kappa}{2},\kappa)$} \\
			\hline
			\multirow{2}[2]{*}{4} & \multicolumn{3}{c|}{$\{v_{2},\ldots,v_{4n/5}\}\subset(1,100)$} & \multirow{2}[2]{*}{7} & \multicolumn{3}{c|}{$\{v_{2},\ldots,v_{n-10}\}\subset(1,100)$} \\
			& \multicolumn{3}{c|}{$\{v_{4n/5+1},\ldots,v_{n-1}\}\subset(\frac{\kappa}{2},\kappa)$} &       & \multicolumn{3}{c|}{$\{v_{n-9},\ldots,v_{n-1}\}\subset(\frac{\kappa}{2},\kappa)$} \\
			\hline
	\end{tabular}}%
	\label{tab:spectrum}%
\end{table}%

We conduct a comparative analysis of several superior algorithms that share similarities with the RBB paradigm. \cite{Dai2019familyspectralgradient} considered the combination of the BB1 and BB2 step sizes and proposed an adaptive truncation scheme based on the cyclic BB strategy (ATC) as follows  
\begin{equation}\label{ATC}
\alpha_{k}^{ATC}=\begin{cases}
\alpha_{k}^{BB1},\quad&\text{if}\quad\text{mod}(k,\widetilde{m})=0,\\
\widetilde{\alpha}_{k},\quad&\text{otherwise},
\end{cases}
\end{equation} 
where the cyclic length $\widetilde{m}$ is a positive integer and 
\begin{equation*}
\widetilde{\alpha}_{k}=\begin{cases}
\alpha_{k}^{BB1},\quad&\text{if}\quad\alpha_{k-1}\le\alpha_{k}^{BB1},\\
\alpha_{k}^{BB2},\quad&\text{if}\quad\alpha_{k-1}\ge\alpha_{k}^{BB2},\\
\alpha_{k-1},\quad&\text{otherwise}.
\end{cases}
\end{equation*} 
\cite{Ferrandi2023harmonicframeworkstepsize} presented a harmonic Rayleigh quotient
\begin{equation}\label{TBB}
\alpha_{k}^{TBB}(\xi_{k})=\frac{\mathbf{y}_{k-1}^{\T}(\mathbf{y}_{k-1}-\xi_{k}\mathbf{s}_{k-1})}{\mathbf{s}_{k-1}^{\T}(\mathbf{y}_{k-1}-\xi_{k}\mathbf{s}_{k-1})}
\end{equation}
with a target $\xi_{k}\in \mathbb{R}$. In a sense, this can also be regard as a regularization, though it is rooted in $\alpha_{k}^{BB2}$ and seeks to approximate $\alpha_{k}^{BB1}$ by adjusting target. We consider a variants of the \text{ABBmin} method, which we indicate with  \text{ABBbon} \cite{Bonettini2009scaledgradientprojection}. 
\text{ABBbon} is defined in the same way as \text{ABBmin} but with an adaptive threshold value $\nu_{k}$ as follows
\begin{equation}\label{ABBbon}
\nu_{k+1}=\begin{cases}
0.9\nu_{k},\quad&\text{if}\quad\cos^2\theta_{k}<\nu_{k},\\
1.1\nu_{k},\quad&\text{otherwise},
\end{cases}
\end{equation} 
with $\nu_{1}=0.5$. The authors in  \cite{Huang2021EquippingBarzilaiBorwein} proposed a BB-like gradient step size (BBQ) with two-dimensional quadratic termination property, and numerical results demonstrated that the algorithm is very efficient. Therefore, we use BBQ as a comparison algorithm and follow the parameter settings in \cite{Huang2021EquippingBarzilaiBorwein}. As suggested in \cite{Frassoldati2008Newadaptivestepsize}, $\tau=0.8$ and $m=9$ was used for the ABBmin method.

We compare the \text{RBB} method with the BB1, BB2 \eqref{BB steps}, \text{ABB} \eqref{ABB}, \text{ATC} \eqref{ATC}, \text{TBB} \eqref{TBB}, ABBmin \eqref{ABBmin}, ABBbon \eqref{ABBbon}, and BBQ \cite{Huang2021EquippingBarzilaiBorwein} methods, where the parameters of the \text{ABB} and \text{ATC} methods are the same as in \cite{Dai2019familyspectralgradient}, and the parameter of the \text{TBB} method is $\xi_{k}=-\cot\theta_{k}$ as suggested in \cite{Ferrandi2023harmonicframeworkstepsize}. In the \text{ERBB} method \eqref{alternate step}, we set $\varrho=5$. 

We present the performance profiles of these methods on the quadratic problems \eqref{pro:quadra} for each tolerance $\varepsilon$ in Figure \ref{figure:different methods quadra}. As observed, when the required convergence precision is $10^{-6}$, ERBB demonstrates significant superiority compared to other algorithms. These numerical results validate the effectiveness of the proposed alternating criterion and adaptive threshold strategy. When the convergence precision requirement is increased to $10^{-10}$, BBQ achieves the best performance due to its quadratic termination property, followed closely by ERBB, both notably outperforming the ABBbon and ABBmin methods. If computational complexity is not considered, RBBA, which does not employ alternating strategies, shows remarkable performance, achieving results comparable to ABBbon. This evidence indicates that the regularization parameters are effective.

\begin{figure}[!ht]
	\centering
	\subfigure[$\varepsilon=10^{-6}$]{
		\includegraphics[width=0.49\textwidth]{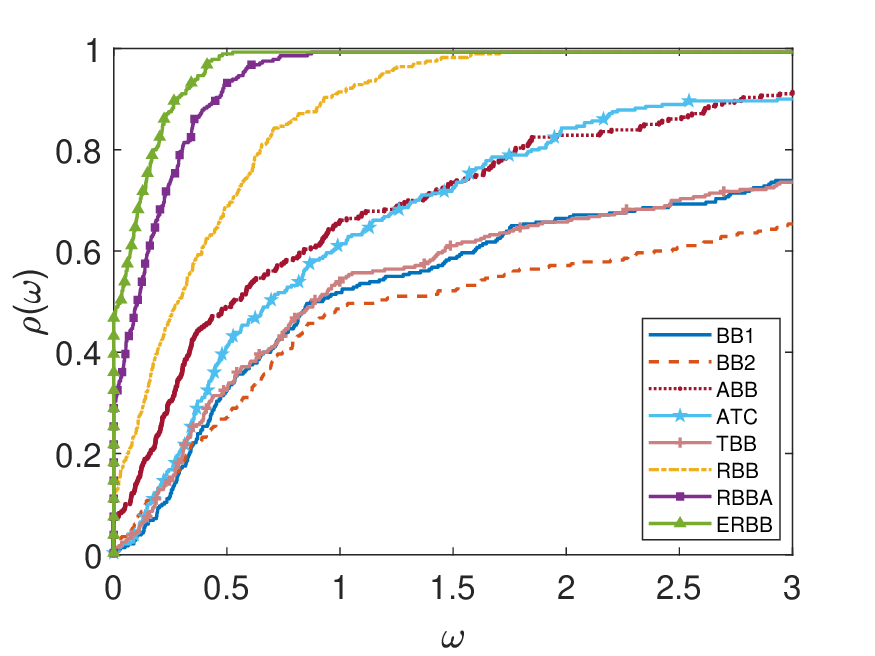}}\hspace{-4pt}
	\subfigure[$\varepsilon=10^{-6}$]{
		\includegraphics[width=0.49\textwidth]{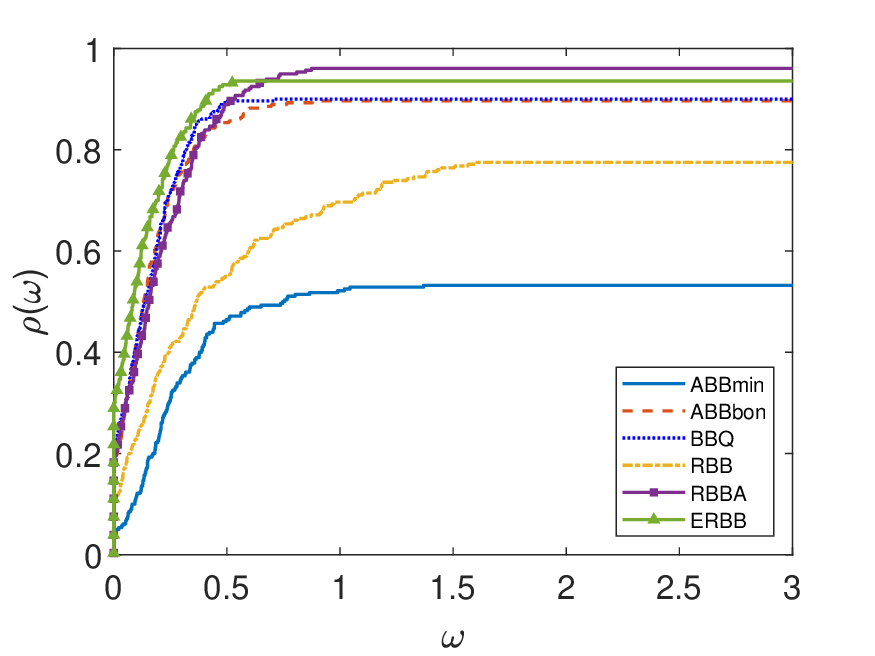}}\\
	\subfigure[$\varepsilon=10^{-10}$]{
		\includegraphics[width=0.49\textwidth]{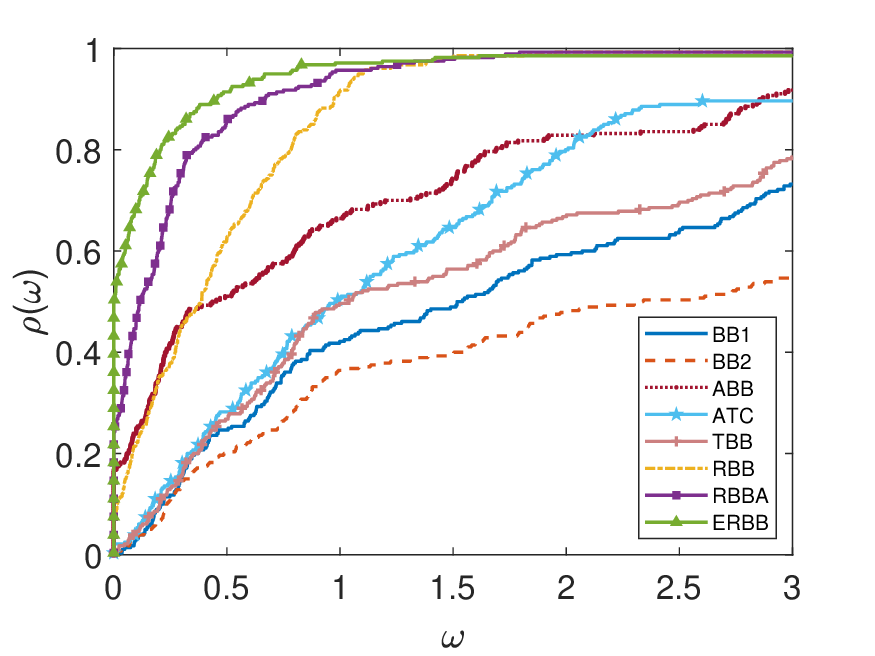}}\hspace{-4pt}
	\subfigure[$\varepsilon=10^{-10}$]{
		\includegraphics[width=0.49\textwidth]{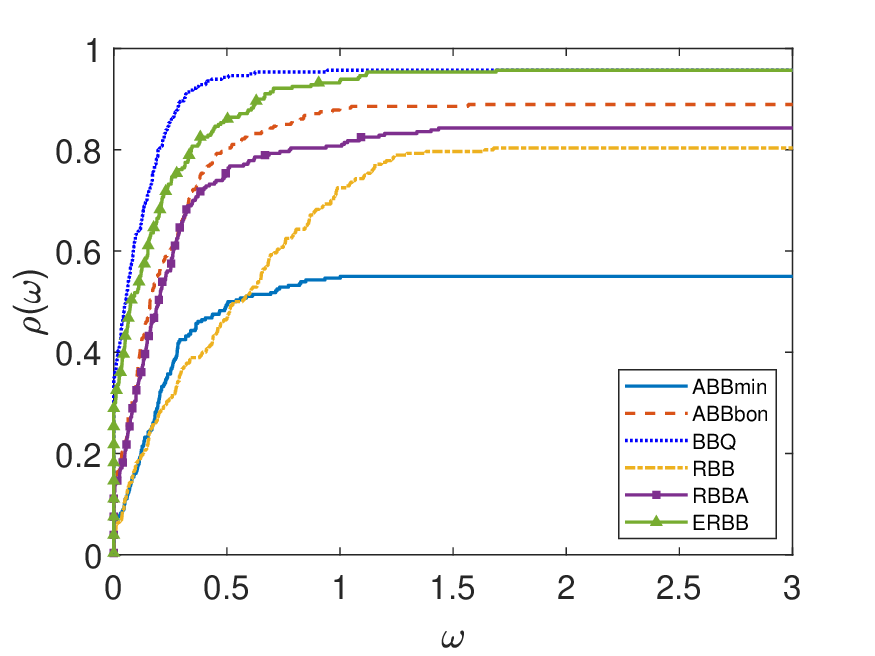}}
	\caption{\textit{Performance profiles of the BB1, BB2, ABB, ATC, TBB, ABBmin, ABBbon, BBQ, RBB, RBBA, ERBB methods on the strictly convex quadratic problems \eqref{pro:quadra} with $n=1000$}, iteration metric}	
	\label{figure:different methods quadra}	
\end{figure}

We next compare these methods on a two-point boundary value problem \cite{Huang2022accelerationBarzilaiBorweinmethod} which can be transferred as a linear system $Ax=b$ by the finite difference method. In particular, the matrix $\mathbf{A}=(a_{i,j})$ is given by
\begin{equation}\label{equ:TBV}
a_{i,j}=\begin{cases}
\frac{2}{h^2},\quad&\text{if}\quad i=j,\\
-\frac{1}{h^2},\quad&\text{if}\quad i=j\pm 1,\\
0,\quad&\text{otherwise},
\end{cases}
\end{equation}
where $h=11/n$. Obviously, $\kappa(\mathbf{A})$ increases as $n$ becomes large. Likewise, we consider the objective function \eqref{pro:quadra}.

The parameter settings of the compared algorithms are consistent with those in the preceding test. We set $n=500, 1000, 1500, 2000, 2500$, and use $x_{1}=\mathbf{1}$ as  starting point. In this practical problem, we set the convergence precisions to $\varepsilon=10^{-4}, 10^{-8}$, respectively. Ten independent runs were performed with these settings. 

\begin{figure}[!ht]
	\centering
	\subfigure[$\varepsilon=10^{-4}$]{
		\includegraphics[width=0.49\textwidth]{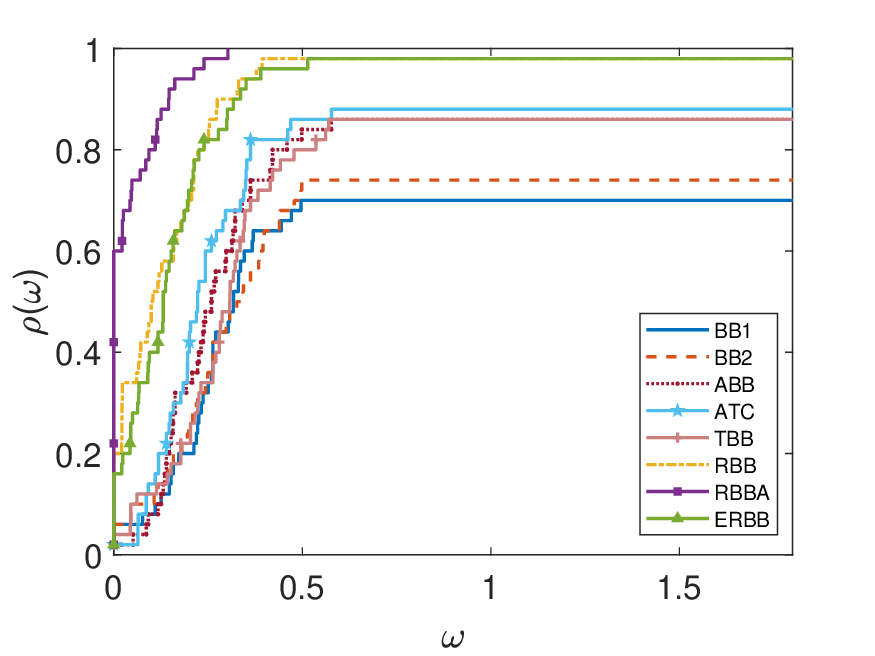}}\hspace{-4pt}
	\subfigure[$\varepsilon=10^{-4}$]{
		\includegraphics[width=0.49\textwidth]{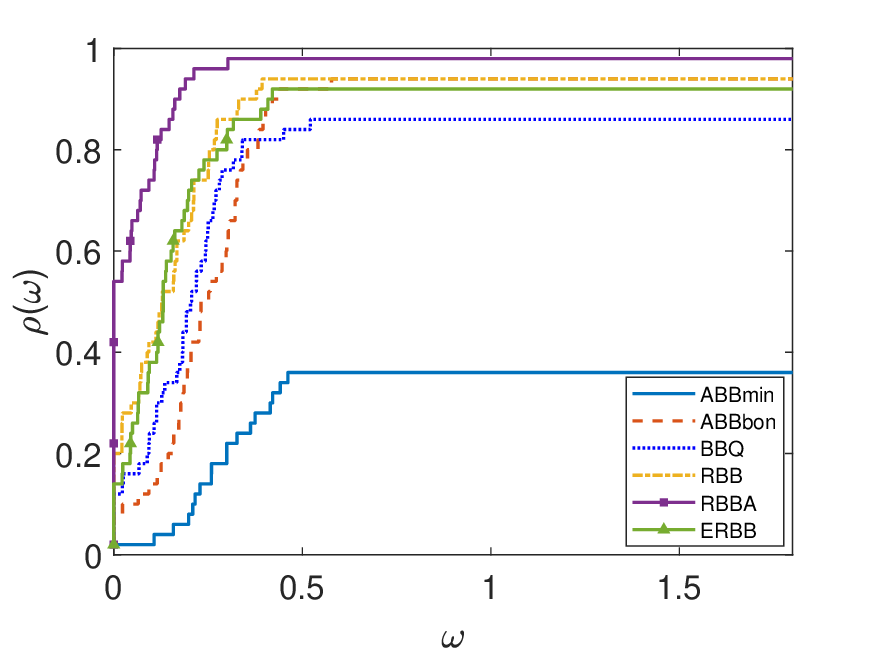}}\\
	\subfigure[$\varepsilon=10^{-8}$]{
		\includegraphics[width=0.49\textwidth]{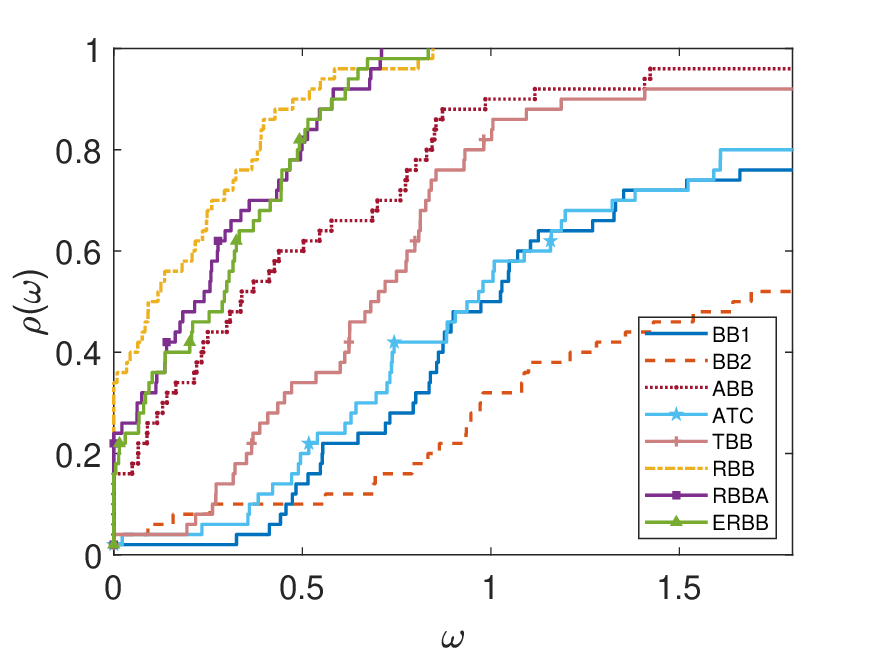}}\hspace{-4pt}
	\subfigure[$\varepsilon=10^{-8}$]{
		\includegraphics[width=0.49\textwidth]{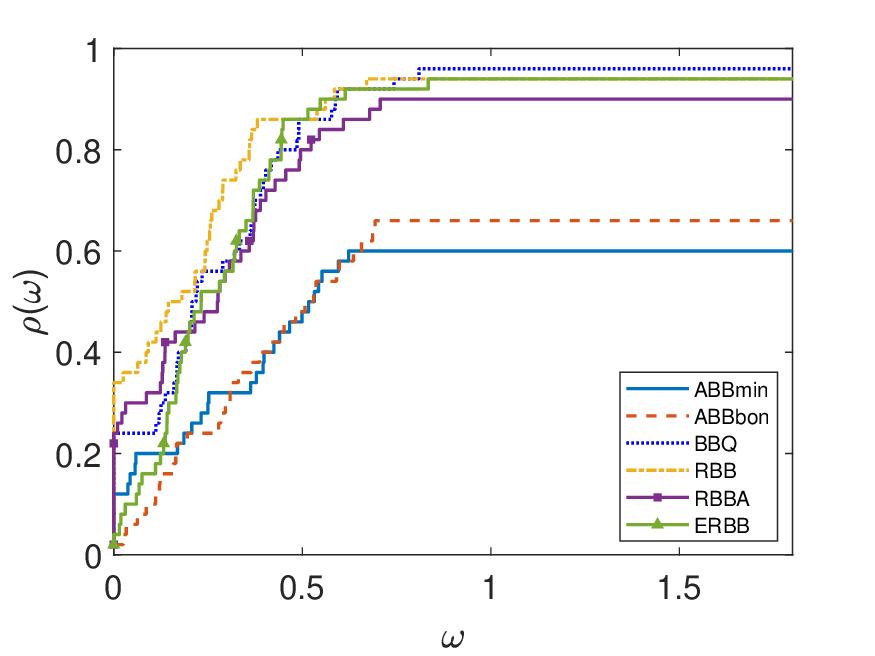}}
	\caption{\textit{Performance profiles of the BB1, BB2, ABB, ATC, TBB, ABBmin, ABBbon, BBQ, RBB, RBBA, ERBB methods on the two-point bound value problems \eqref{equ:TBV}}, iteration metric}	
	\label{figure:two-point bound value}	
\end{figure} 

From the results in Figure \ref{figure:two-point bound value}, it can be observed that at low precision requirement ($\varepsilon=10^{-4}$), RBBA demonstrates the best performance, followed by RBB and ERBB. When the precision increases to $\varepsilon=10^{-8}$, RBB exhibits the best performance.
\subsection{Solving non-quadratic minimization problems}\label{sec:NonQua}
In this subsection, we implement Algorithm \eqref{alg:RBB} to solve general optimization problems. Generally, as described in Section \ref{sec:nonqua}, we need more adjustable parameters than in quadratic problems. We maintain the parameter settings in \cite{Serafino2018TwoPhaseGradient} and set $\alpha_{\min}=10^{-30}$, $\alpha_{\max}=10^{30}$, $\sigma=10^{-4}$, $\delta=\frac{1}{2}$, $M=10$, the maximum number of internal non-monotone line searches to $100$ per iteration. The wide step size bound is to choose BB-like steps as much as possible. Following Raydan \cite{Raydan1997BarzilaiBorweinGradienta}, we choose $\frac{1}{\hat{\alpha}_{k}}=\max\big(\min(\|\mathbf{g}_{k}\|_{2}^{-1}, 10^{5}),1\big)$ as the replacement for negative step size, and use $\frac{1}{\alpha_{1}}=\|\mathbf{x_{1}}\|_{\infty}/\|\mathbf{g}_{1}\|_{\infty}$ if $x_{1}\neq 0$ and otherwise $\frac{1}{\alpha_{1}}=1/\|\mathbf{g}_{1}\|_{\infty}$. We terminate the algorithm when $\|\mathbf{g}_{k}\|_{2}<\varepsilon$ or the number of iterations reaches $20000$ or the number of function evaluations reaches $10^5$. The parameters of each algorithm are consistent with the preceding settings. 

We first consider the classical Rosenbrock function \cite{Rosenbrock1960AutomaticMethodFinding}
\begin{equation}\label{Rosebrock}
	f(\mathbf{x})=c\big(x^{(2)}-(x^{(1)})^2\big)^{2}+(1-x^{(1)})^{2},
\end{equation}
which is often used as a test case for optimization algorithms, where $c$ is a constant that controls the difficulty of the problem, the initial point is $(x_{1}^{(1)}, x^{(2)}_{1})=(-1.2, 1)$. The global minimum is inside a long, narrow, parabolic-shaped flat valley. We compare the performance of these BB-like methods. We set the stop criterion as $\|(x^{(1)}_{k}, x^{(2)}_{k})-(x^{(1)}_{*}, x^{(2)}_{*})\|_{2}<\varepsilon$, where $(x^{(1)}_{*}, x^{(2)}_{*})=(1, 1)$ is the minimizer of $f(x)$. We report in Table \ref{tab:Rosebrock2} the number of iterations for different algorithms with different $\varepsilon$, where the $``-"$ sign indicates that the number of iterations exceeds $9000$. 

\begin{center}
	\setlength{\tabcolsep}{3.5pt}{
		\begin{longtable}[!ht]{cccccccccccc}
			\caption{Performance of the BB1, BB2, ABB, ABBmin, ABBbon, ATC, TBB, BBQ, RBB, ERBB methods on Rosenbrock function}
			\label{tab:Rosebrock2}\\
			\toprule
			\multicolumn{1}{c}{c} & \multicolumn{1}{c}{$\varepsilon$} & \multicolumn{1}{c}{BB1}&
			\multicolumn{1}{c}{BB2}&
			\multicolumn{1}{c}{ABB}&
			\multicolumn{1}{c}{ABBmin}&
			\multicolumn{1}{c}{ABBbon}&
			\multicolumn{1}{c}{ATC}&
			\multicolumn{1}{c}{TBB}&
			\multicolumn{1}{c}{BBQ}&
			\multicolumn{1}{c}{RBB}&
			\multicolumn{1}{c}{ERBB}\\ 
			\midrule
			
			\endfirsthead
			
			\multicolumn{10}{c}%
			{{\bfseries \tablename\ \thetable{} -- continued from previous page}} \\
			\toprule \multicolumn{1}{c}{P} & \multicolumn{1}{c}{$\varepsilon$} & \multicolumn{1}{c}{BB1}&
			\multicolumn{1}{c}{BB2}&
			\multicolumn{1}{c}{ABB}&
			\multicolumn{1}{c}{ABBmin}&
			\multicolumn{1}{c}{ABBbon}&
			\multicolumn{1}{c}{ATC}&
			\multicolumn{1}{c}{TBB}&
			\multicolumn{1}{c}{BBQ}&
			\multicolumn{1}{c}{RBB}&
			\multicolumn{1}{c}{ERBB}\\ 
			\midrule 
			\endhead
			
			\hline \multicolumn{10}{l}{{Continued on next page}} \\ 
			\endfoot
			
			\hline
			\endlastfoot
			\multirow{4}[1]{*}{$10^2$} & $10^{-1}$ & \textbf{36} & 51    & 114   & 56    & 76    & 89    & 899   & 609   & 55    & 74 \\
		    & $10^{-2}$ & \textbf{41} & 57    & 131   & 80    & 82    & 89    & 1726  & 777   & 61    & 103 \\
		    &$10^{-4}$ & \textbf{49} & 63    & 136   & 934   & 260   & 100   & 4647  & 1709  & 67    & 106 \\
		    &$10^{-8}$ & \textbf{53} & 69    & 142   & 934   & 262   & 106   & --    & 4093  & 72    & 184 \\
		\midrule
		\multirow{4}[2]{*}{$10^3$} & $10^{-1}$ & 131   & \textbf{125} & 202   & 163   & 163   & 163   &--     & 7174  & 134   & 176 \\
	    &$10^{-2}$ & 136   & 136   & 202   & 199   & 200   & 169   & --    & --     & \textbf{134} & 224 \\
	    &$10^{-4}$ & 144   & 141   & 214   & 288   & 286   & 175   & --     & --     & \textbf{140} & 247 \\
		&$10^{-8}$ & 148   & 148   & 219   & 288   & 346   & 183   & --    & --     & \textbf{147} & 287 \\
		\midrule
		\multirow{4}[2]{*}[5ex]{$10^4$} & $10^{-1}$ & \textbf{262} & 409   & 479   & 302   & 307   & 462   & --    & --     & 329   & 278 \\
	    &$10^{-2}$ & \textbf{286} & 444   & 499   & 327   & 331   & 500   & --     & --     & 354   & 305 \\
		&$10^{-4}$ & \textbf{291} & 450   & 511   & 411   & 391   & 505   & --     & --     & 359   & 358 \\
		&$10^{-8}$ & \textbf{299} & 480   & 536   & 710   & 754   & 553   & --     & --     & 364   & 448 \\
		\midrule
		\multirow{4}[2]{*}{$10^5$} & $10^{-1}$ & 645   & 634   & 800   & 582   & 582   & 717   & --     & --     & 516   & \textbf{219} \\
		&$10^{-2}$ & 685   & 689   & 850   & 612   & 613   & 784   & --     & --     & 566   & \textbf{250} \\
		&$10^{-4}$ & 696   & 689   & 850   & 714   & 711   & 795   & --     & --     & 571   & \textbf{341} \\
		&$10^{-8}$ & 721   & --     & 866   & 1014  & 975   & 843   & --     & --     & 582   & \textbf{413} \\
	\end{longtable}}
\end{center}

In this problem, when $c=10^2, 10^3$, the number of iterations required by BB1, BB2, and RBB are comparable. When $c=10^4$, BB1 requires the fewest iterations. Whereas when $c=10^5$, ERBB requires the minimal iteration number. Nevertheless, TBB and BBQ need a significantly large number of iterations. 
 
\subsubsection{Test on a collection of unconstrained optimization functions}
For general objective functions, the performance of these methods were tested on a collection of unconstrained minimization problems from \cite{Andrei2008UnconstrainedOptimizationTest} with dimension less than or equal to 5000, which provides a standard starting point $x_{1}$ for each problem, and some of these tests are derived from the CUTEst \cite{More1981TestingUnconstrainedOptimization}. We delete the problem if either it can not be solved in 20000 iterations by any of the algorithms or the function evaluation exceeds $10^5$ and 74 problems are left. The stopping condition $\|\mathbf{g}_{k}\|_{2}\le 10^{-5}$ was adopted for these compared algorithms. In this part, TBB and BBQ methods have similar phenomena as in preceding Rosenbrock function, so we do not show the numerical results for these two.

Performance profiles of these algorithms using the number of iterations and function evaluations metrics are plotted in Figure \ref{figure:CUTE}. From Figure \ref{figure:CUTE}, it can be seen that RBB and ERBB perform significantly better than other compared algorithms, which is attributed to the fact that the regularization parameter includes information about the local morphological changes of the objective function, making them applicable to general objective functions.

\begin{figure}[!ht]
	\centering
	\subfigure{
		\includegraphics[width=0.49\textwidth]{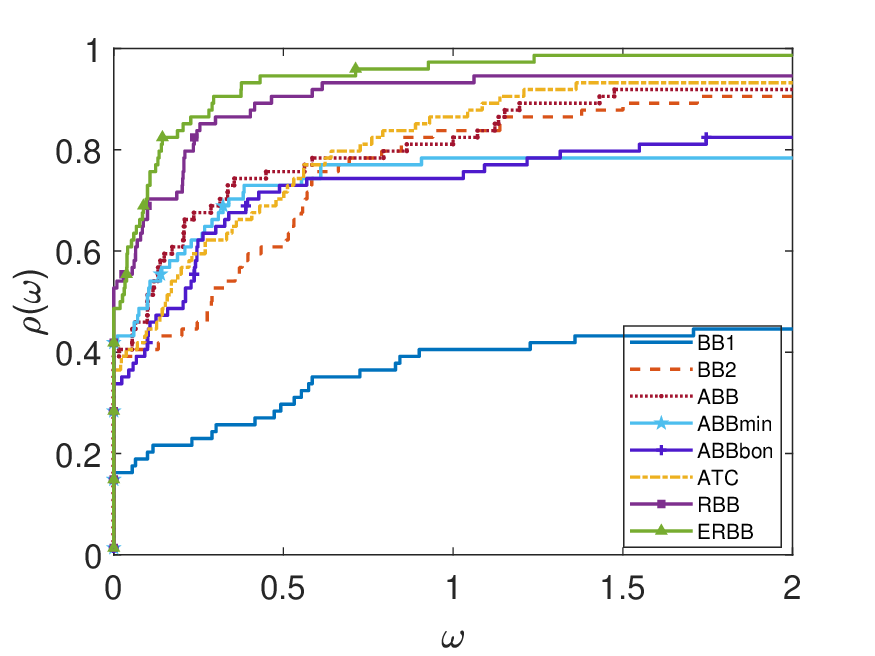}}\hspace{-4pt}
	\subfigure{
		\includegraphics[width=0.49\textwidth]{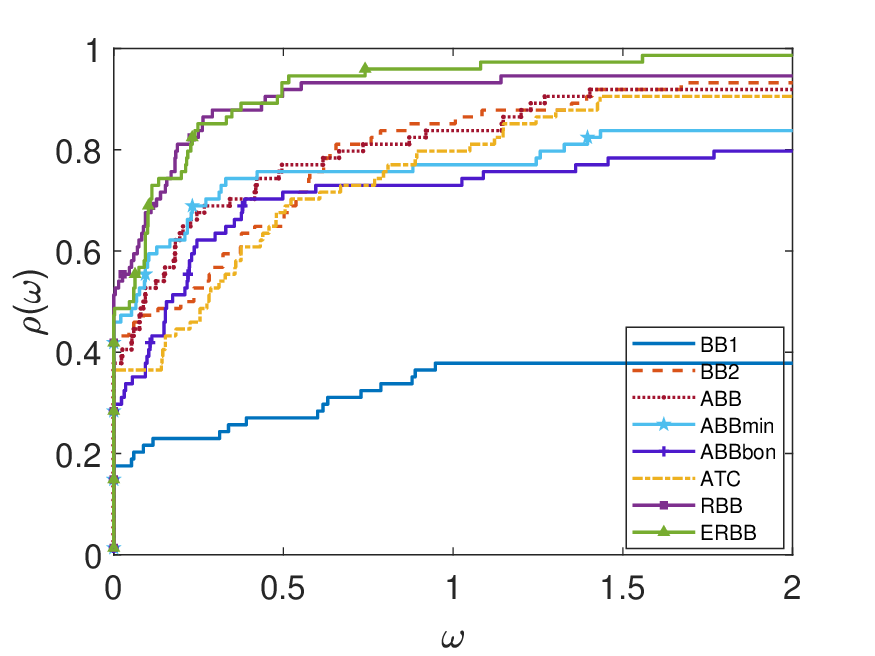}}
	\caption{\textit{Performance profiles of the BB1, BB2, ABB, ABBmin, ABBbon, ATC, RBB, ERBB methods on 74 unconstrained problems from  \cite{Andrei2008UnconstrainedOptimizationTest}}, iteration (left) and function evaluation (right) metrics}	
	\label{figure:CUTE}	
\end{figure}

\subsection{Finding spherical t-designs}
In this part, we consider a numerical computational problem of finding a set of points with ``good" distribution on the unit sphere $\mathbb{S}^{2}:=\{(x,y,z)^{\T}\in\mathbb{R}^{3}|x^2+y^2+z^2=1\}$. A point set $X_N=\{\mathtt{x}_{1},\ldots,\mathtt{x}_{N}\}\subset\mathbb{S}^{2}$ is a spherical $t$-design if it satisfies
\begin{equation*}
\frac{1}{N}\sum_{j=1}^{N}p(\mathtt{x}_{j})=\frac{1}{4\pi}\int_{\mathbb{S}^{2}}p(\mathtt{x})d\omega(\mathtt{x}), \quad \forall \ p\in\mathbb{P}_{t},
\end{equation*}
where $d\omega(\mathtt{x})$ denotes area measure on the unit sphere, $\mathbb{P}_{t}:=\mathbb{P}_{t}(\mathbb{S}^{2})$ is the space of spherical polynomials on $\mathbb{S}^{2}$ with degree at most $t$. For more details on spherical designs, see \cite{Delsarte1991Sphericalcodesdesigns,Congpei2010WellConditionedSpherical,Bannai2009surveysphericaldesigns}. In  \cite{Sloan2009variationalcharacterisationspherical}, the authors present a variational characterization of spherical $t$-design as follows
\begin{equation}\label{variational}
A_{N,t}(X_N):=\frac{4\pi}{N^2}\sum_{j=1}^{N}\sum_{i=1}^{N}\sum_{n=1}^{t}\frac{2n+1}{4\pi}P_n(\langle\mathbf{x}_{j},\mathbf{x}_{i}\rangle),
\end{equation}
where $P_{n}:[-1,1]\rightarrow\mathbb{R}$ is the Legendre polynomial and $\langle\mathbf{x},\mathbf{y}\rangle:=\mathbf{x}^{\T}\mathbf{y}$ is the inner product in $\mathbb{R}^{3}$. It is know that $X_N^{*}$ is a spherical $t$-design if and only if $A_{N,t}(X_N^{*})=0$ \cite{Sloan2009variationalcharacterisationspherical}, and the set of points on the sphere forms an \textit{Oblique} manifold ($\mathcal{OB}(3,N)$) \cite{Absil2008OptimizationAlgorithmsMatrix}. Based on these facts, finding spherical $t$-designs is equivalent to \text{solving} the following matrix optimization problem

\begin{equation}\label{tDesign}
\begin{aligned}
\min \quad   &A_{N,t}(X_{N})\\
\text{s.t.} \quad & X_{N}\subset\mathcal{OB}(3,N),
\end{aligned}
\end{equation}
where $\mathcal{OB}(3,N):=\{X\in\mathbb{R}^{3\times N}: \text{ddiag}(X^{\T}X)=\mathbf{I}_{N}\}$, where $\text{ddiag}(\mathbf{A})$ denotes the matrix $\mathbf{A}$ with all its off-diagonal elements assigned to zero, $\mathbf{I}_{N}$ is the $N\times N$ identity matrix.

Given that the sphere has good geometric properties, problem \eqref{tDesign} is actually an unconstrained optimization problem on a matrix manifold. Therefore, for finding spherical $t$-designs, \cite{An2020Numericalconstructionspherical} numerically construct spherical $t$-designs by using BB method. Based on the code in \cite{An2020Numericalconstructionspherical}, we perform the numerical experiments in this subsection. The termination condition of the algorithms is as follows
\begin{equation*}
\|g_{k}\|_{2}<\varepsilon_{1}\|g_{1}\|_{2} \quad \text{or}\quad \|f(x_{k})-f(x_{k-1})\|_{2}<\varepsilon_{2},
\end{equation*}
where $\varepsilon_{1}=10^{-8}$, $\varepsilon_{2}=10^{-16}$. We set the initial step size to $1$. The maximum number of iterations and function evaluations are $10000$ and $20000$, respectively. The parameter settings of these comparison methods are the same as those in the preceding experiments. The initial points in this experiment are consistent with those in \cite{An2020Numericalconstructionspherical}. From the work of Chen et al. \cite{Chen2010Computationalexistenceproofs}, we know that spherical $t$-designs with $N=(t+1)^2$ points exist for all degree $t$ up to $100$ on $\mathbb{S}^{2}$. This encourage us to find higher degree $t$ for spherical $t$-designs. Let $t\ge2$ and $N\ge(t+2)^2$. Assume $X_{N}\subset\mathbb{S}^{2}$ is a stationary point set of $A_{N,t}$ and the minimal singular value of basis matrix $Y_{t+1}(X_{N})$ is positive. Then $X_{N}$ is a spherical $t$-design \cite[Thm.2.4]{An2020Numericalconstructionspherical}. 

Table \ref{tab:spherical} presents the numerical results of these BB-like methods, where $nf$ refers to the number of function evaluations when the termination condition is met, $m(\sigma)$ represents the minimum singular value of the basis system formed $Y_{t+1}(X_{N}^{*})$ by $X_{N}^{*}$, $G^{*}$ denotes the obtained $\|\nabla A_{N,t}(X_{N}^{*})\|_{2}$, and $F^{*}$ refers to the obtained $A_{N,t}(X_{N}^{*})$.

\begin{table}[h]
	\centering
	\caption{Computing spherical $t$-designs with $N=(t+1)^2$ by BB-like methods}
	\setlength{\tabcolsep}{1.5pt}{
		\begin{tabular}{|c|c|cccccccccc|}
			\hline
			& t     & BB1   & BB2   & ABB   & ABBmin & ABBbon & ATC   & TBB   & BBQ   & RBB   & ERBB \\
			\hline
		\multirow{5}[2]{*}{nf} & 10    & 86    & 63    & 56    & 58    & 56    & 70    & 76    & 59    & 57    & 61 \\
		& 50    & 238   & 117   & 160   & 118   & 112   & 136   & 139   & 111   & 107   & 125 \\
		& 70    & 196   & 180   & 149   & 161   & 156   & 205   & 166   & 123   & 133   & 142 \\
		& 90    & 2490  & 1751  & 1123  & 1506  & 1353  & 1417  & 2022  & 1133  & 1131  & 1335 \\
		& 130   & 567   & 212   & 206   & 241   & 211   & 309   & 307   & 183   & 225   & 196 \\
		\hline
		\multirow{5}[2]{*}{m($\sigma$)} & 10    & 1.3148  & 1.3149  & 1.3148  & 1.3148  & 1.3148  & 1.3148  & 1.3148  & 1.3148  & 1.3148  & 1.3148  \\
		& 50    & 2.3558  & 2.3558  & 2.3558  & 2.3557  & 2.3557  & 2.3558  & 2.3558  & 2.3558  & 2.3557  & 2.3558  \\
		& 70    & 2.1394  & 2.1395  & 2.1394  & 2.1393  & 2.1393  & 2.1394  & 2.1394  & 2.1393  & 2.1393  & 2.1393  \\
		& 90    & 0.0037  & 0.0013  & 0.0192  & 0.0048  & 0.0059  & 0.0035  & 0.0180  & 0.0117  & 0.0005  & 0.0053  \\
		& 130   & 2.0870  & 2.0872  & 2.0871  & 2.0870  & 2.0870  & 2.0871  & 2.0871  & 2.0870  & 2.0871  & 2.0871  \\
		\hline
		\multirow{5}[2]{*}{$G^{*}$} & 10    & 7.6E-08 & 6.9E-08 & 6.2E-08 & 4.9E-08 & 6.6E-08 & 2.3E-08 & 3.8E-10 & 1.0E-08 & 8.2E-08 & 9.0E-08 \\
		& 50    & 3.9E-07 & 2.2E-07 & 3.9E-07 & 3.6E-07 & 3.9E-07 & 3.4E-07 & 3.7E-07 & 3.1E-07 & 2.9E-07 & 3.0E-07 \\
		& 70    & 4.8E-07 & 5.4E-07 & 4.7E-07 & 5.3E-07 & 3.9E-07 & 2.7E-07 & 5.1E-07 & 4.4E-07 & 4.8E-07 & 5.3E-07 \\
		& 90    & 5.2E-07 & 5.5E-07 & 7.1E-07 & 7.1E-07 & 7.7E-07 & 5.7E-07 & 5.4E-07 & 6.6E-07 & 7.4E-07 & 6.0E-07 \\
		& 130   & 9.8E-07 & 7.6E-07 & 8.6E-07 & 9.5E-07 & 8.8E-07 & 6.5E-07 & 9.6E-07 & 9.9E-07 & 9.6E-07 & 8.0E-07 \\
		\hline
		\multirow{5}[2]{*}{$F^{*}$} & 10    & 8.3E-14 & 4.6E-15 & 3.4E-14 & 4.2E-15 & 3.3E-14 & 8.1E-15 & 2.3E-15 & 2.3E-15 & 5.9E-14 & 8.5E-14 \\
		& 50    & 1.3E-11 & 8.5E-12 & 5.6E-12 & 1.3E-11 & 1.6E-11 & 1.1E-11 & 1.4E-11 & 9.6E-12 & 9.0E-12 & 7.8E-12 \\
		& 70    & 1.7E-11 & 3.5E-11 & 1.3E-11 & 3.3E-11 & 1.1E-11 & 7.3E-12 & 2.9E-11 & 1.2E-11 & 1.9E-11 & 1.9E-11 \\
		& 90    & 1.7E-11 & 3.0E-10 & 8.6E-11 & 4.1E-10 & 9.3E-11 & 2.1E-11 & 2.7E-10 & 2.3E-10 & 4.0E-10 & 3.6E-11 \\
		& 130   & 3.4E-10 & 1.9E-10 & 9.0E-11 & 2.4E-10 & 1.7E-10 & 5.0E-11 & 1.9E-10 & 1.8E-10 & 8.9E-11 & 8.8E-11 \\
	\hline
\end{tabular}}%
\label{tab:spherical}%
\end{table}%

From the data in Table \ref{tab:spherical}, we can see that in terms of the number of function value evaluations, the performance of RBB is comparable to that of BBQ, and that of ERBB is comparable to that of ABB, and their performance is significantly better than the rest of the algorithms. In the case of $t=130$, it can be seen from the numerical results that $\min(\sigma)>0$ holds, so the obtained $X_{N}^{*}$ by these algorithms is non-singular, which indicates that $X_{N}^{*}$ is a spherical $t$-design.

\section{Conclusion}\label{conclusion}
In this paper, we first propose a regularized least squares model, which provides a modification of the original BB1 method, and then consider two types of $\Phi(A)$ for the quadratic optimization problem, which correspond to two types of spectral gradient step sizes. We provide a mathematical explanation of the ABB method. Based on this and the local mean curvature of the objective function, we propose a three-step regularization parameter scheme that enables the RBB step size to quickly approximate the BB1 or BB2 step size. Based on the analysis of the ABB method, we find a new alternate step size criterion and incorporate this criterion into ABBmin, thereby obtaining an enhanced RBB method. How to select more efficient regularization parameters and analyze ABB when $n\ge 3$ require further exploration.

\section{Declaration}
\subsection{Ethical Approval}
Not Applicable.
\subsection{Availability of supporting data}
The data are all included in the paper.
\subsection{Competing interests}
The authors declare that there is no conflict of interest.
\subsection{Funding}
The work was supported by the National Natural Science Foundation of China (Project No. 12371099).
\subsection{Authors' contribution}
In this work, Xin Xu and Congpei An contributed equally, with Xin Xu acting as the corresponding author.

\bibliography{RBB}

\end{document}